\documentclass[]{article}
\usepackage{amsmath,amsthm,amssymb}
\title{The dimension of Kronheimer-Mrowka instanton homology group for plane trivalent graphs}
\author{Zipei Zhuang}

\usepackage{pifont}
\usepackage{indentfirst}
\usepackage{setspace}
\usepackage{mathrsfs}
\bibliographystyle{plain}
\usepackage[backref]{hyperref}

\usepackage{subfigure}
\usepackage{graphicx}
\usepackage{import}
\usepackage{xifthen}
\usepackage{pdfpages}
\usepackage{transparent}
\newtheorem{theorem}{Theorem}%
\newtheorem{lemma}{Lemma}%
\newtheorem{corollary}{Corollary}%
\newtheorem{proposition}{Proposition}

\begin{document}
	
	\maketitle
	
	\begin{abstract}
		We proved that the dimension of the  $\mathbb{F}$-vector space $J^\#(G)$ for a plane trivalent graph $G$, defined by Kronheimer and Mrowka in \cite{MR3880205}, is equal to the number of Tait colorings of $G$.
	\end{abstract}

\section{Introduction}
Let $G$ be a trivalent graph. A $\mathbf{Tait \ coloring}$ of $G$ is a function from the edges of $G$ to a 3-element set of "colors" \{1,2,3\} such that edges of three different colors are incident at each vertex. The famous four-color theorem is euqivalent to that every bridgeless planar trivalent graph admits a Tait coloring.

In \cite{MR3880205}, Kronheimer and Mrowka used their $SO(3)$ instanton Floer homology to construct an $\mathbb{F}$-vector space $J^\#(G)$ ($\mathbb{F}$ is the field of 2 elements) for each trivalent graph in $\mathbb{R}^3$. They conjectured that when $G$ is planar, the dimension of $J^\#(G)$ is equal to the number $T(G)$ of Tait colorings of $G$. If it were true, the non-vanishing theorem they proved (\cite{MR3880205} Theorem 1.1) which says that $J^\#(G)$ is zero if and only if $G$ has an embedded bridge, would provide an alternative proof of the four-color theorem.

$J^\#(G)$ is a indeed a functor, from the category of webs and foams to the category of $\mathbb{F}$-vector space, satisfying a series of local properties analogous to Khovanov's $sl_3$ homology  $H_3$ for MOY graphs\cite{MR2100691}.
Indeed, they also proposed a combinatorial counterpart $J^b$ to $J^\#$ for plane trivalent graphs by imitating the construction of $sl_3$ homology, whose well-definedness is proved in \cite{MR4178907}.

 Khovanov used the invariant $H_3$ to construct a homology theory for knots. More specifically, given a knot diagram $D$, he resolutes  each crossing point in two ways and get $2^n$ MOY graphs, where $n$ is the number of crossings in $D$. The $sl_3$ homology groups of these $2^n$ graphs are organized into a complex $C_3(D)$. Using the local relations of $H_3$ on  foams he proved that the chain homotopy type of $C_3(D)$ is independent of the choice of $D$, i.e., invariant under the Reidemeister moves. In particular, the homology and the Euler characteristic of the complex are invariants of the knot.   The construction of $C_3(D)$ can be explained by the picture:

	\begin{figure}[h]
	\def\svgwidth{\columnwidth}
	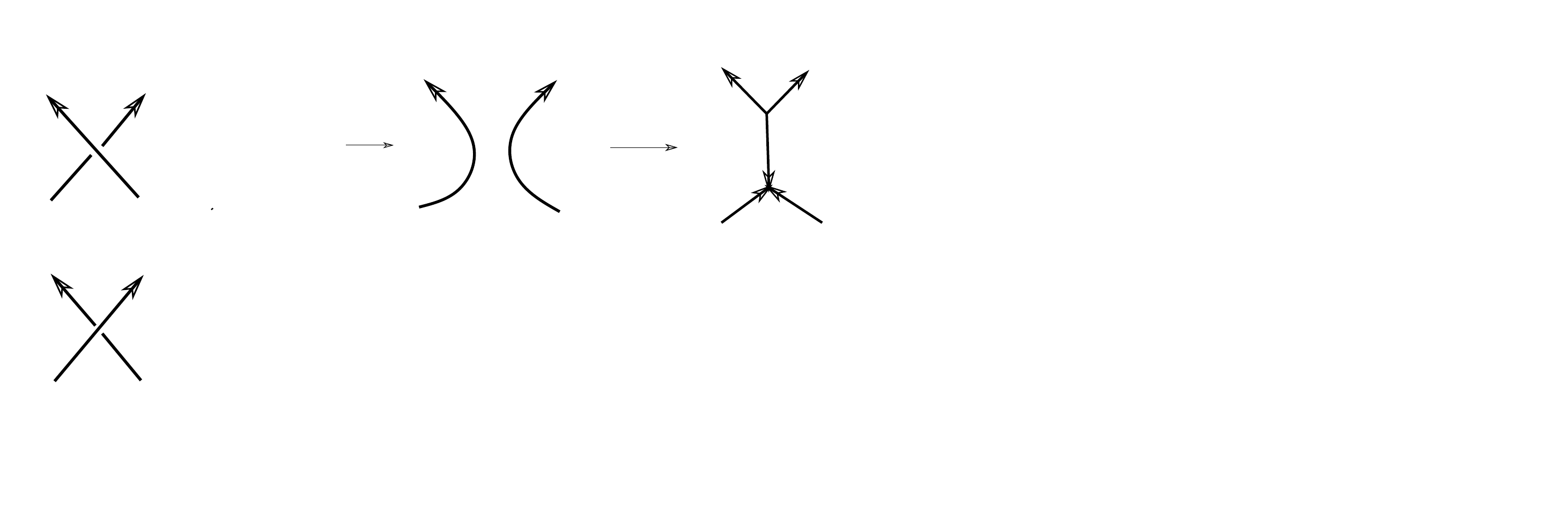
	\caption{Local relation of the complex of Khovanov's $sl_3$ knot homology} 
	\label{graph1}
\end{figure}

This construction motivates us to prove the conjecture via Penrose's approach to  the number of Tait colorings(\cite{MR0281657}, see also \cite{jaeger1989penrose}). The Penrose number, which is defined for plane trivalent graphs “with crossings”, can be defined through a “skein relation”(see \ref{prop2}), and is equal to the number of Tait colorings when the graph has no crossings.

From now on we use $J$ to denote the $J^\#$ in \cite{MR3880205}, suggesting that our methods also works for $J^b$ in that paper.

\begin{theorem}\label{thm1}
	Let $G$ be a plane trivalent graph ,  then the dimension of $J(G)$ is equal to the number of Tait colorings of $G$.
	
\end{theorem}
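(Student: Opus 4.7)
The plan is to show that $\dim J$ and $T$ satisfy the same Penrose skein relation with the same small-graph base values, following Khovanov's template for $sl_3$ knot homology.

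First, I would extend $J$ from plane trivalent graphs to plane trivalent graphs with crossings by defining, at each crossing, $J$ to be the direct sum of $J$ applied to the two MOY-style resolutions of the crossing shown in Figure~\ref{graph1}. This uses the functoriality of $J^\#$ on webs and foams from \cite{MR3880205}, and produces an integer-valued invariant $\dim J$ that, by construction, satisfies the Penrose skein relation of Proposition~\ref{prop2}.

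Second, I would verify that the basic evaluations of $J$ on simple trivalent graphs (the unknot, two disjoint circles, the theta graph, and so on) agree with the corresponding Tait-coloring counts. These follow from the local foam relations of Kronheimer--Mrowka: the unknot evaluates to $\mathbb{F}^3$; $J$ is multiplicative under disjoint union; bigon removal and square resolution give direct-sum decompositions whose numerical shadows match the combinatorial identities $T(\bigcirc \sqcup G) = 3\, T(G)$, $T(G_{\mathrm{bigon}}) = 2\, T(G_{\mathrm{simple}})$, and $T(G_{\mathrm{square}}) = T(G_H) + T(G_V)$, each verified directly by a short counting argument.

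Third, I would invoke Penrose's recursion: the skein relation of Proposition~\ref{prop2}, together with the small-graph evaluations, uniquely determines the Penrose number, which for plane trivalent graphs without crossings coincides with the number of Tait colorings. Since $\dim J$ satisfies the same skein relation and the same base values, it must agree with $T$ on every plane trivalent graph $G$.

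The main obstacle will be the first step --- checking that extending $J$ across crossings genuinely satisfies Penrose's skein with the correct integer multiplicities. Kronheimer--Mrowka's foam relations are stated as isomorphisms of $\mathbb{F}$-vector spaces, so one must carefully extract integer-dimension identities matching Penrose's formalism (where factors of $2$, although trivial in $\mathbb{F}$, must remain visible at the level of \emph{dimensions}). Translating the foam calculus of \cite{MR3880205} into the Penrose tensor formalism, keeping track of signs and multiplicities throughout, is the technical heart of the proof.
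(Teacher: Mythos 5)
There is a genuine gap at the first step, and it is exactly the point your own closing paragraph worries about. If you extend $J$ across a crossing by taking the \emph{direct sum} of $J$ on the two resolutions, you get $\dim J(D) = \dim J(D') + \dim J(D'')$, whereas Penrose's relation (Prop.~\ref{prop2}) is $P(D) = P(D') - P(D'')$. A direct sum of $\mathbb{F}$-vector spaces can never produce that minus sign, so the claim that your extension satisfies the skein relation ``by construction'' is false as stated. The paper's resolution of this is the essential idea you are missing: organize the $2^{|I|}$ complete resolutions of a virtual web $D$ into a cube of vector spaces with edge maps induced by the basic foam cobordism between the two resolutions, form the total complex $F(D)$, and take its \emph{Euler characteristic} $e(D) = \sum_J (-1)^{|J|}\dim J(D_J)$. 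The mapping-cone structure is what turns the sum into the alternating sum that matches Penrose's recursion.

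A second, related omission: even after fixing the sign, you must show your extension is a well-defined invariant of the virtual web, i.e.\ invariant under the virtual Reidemeister moves. This is not optional bookkeeping. Penrose's recursion reduces an honest planar web to collections of circles by repeatedly trading a vertex pair for diagrams with crossings, so the uniqueness argument (``same skein relation, same value $3^k$ on $k$ circles, hence equal to $P$'') only applies to a quantity that is constant on equivalence classes of virtual webs. The paper establishes this by proving $F(D)$ is a chain homotopy invariant (using the bigon, square, and neck-cutting relations of Section~\ref{sec3}, following Khovanov's $sl_3$ argument), whence $e(D)$ is an invariant by Corollary~\ref{c5}. Your base-case verifications in the second step (circle, disjoint union, theta) are fine and are indeed what the paper uses, but without the complex, its differential, and the Reidemeister invariance, the recursion has nothing to hang on.
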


 Our strategy to prove Theorem  \ref{thm1}     is to generalize the definition of $J$ to any plane trivalent graph with crossings,  just as Khovanov's definition of knot homology group from the $sl_3$ homology of MOY graphs, and to prove that the Euler characteristic of $J$ satisfies the relation of Penrose number in  Prop. \ref{prop2}.

 In Section \ref{sec2} we review the concept of plane trivalent graphs with crossings, and the basic properties of Penrose number. In Section \ref{sec3} we review the local relations of  $J$ proved in \cite{MR3880205}, which is needed in Section \ref{sec4}  when proving the chain homotopy invariance.   Then in Section \ref{sec4} we generalize $J$ to any plane trivalent graphs with crossings, for which the well-definedness follows  from a similar argument for proving that Khovanov's $sl_3$ knot homology is well-defined.

\section{The Penrose number}  \label{sec2}

 In this paper a \textbf{web} is a plane trivalent graph. We need a more general concept.
 
 Let $G$ be a trivalent graph. We can draw $G$ on the plane, with a finite number of crossing points. We call such a drawing  a \textbf{virtual web}, with underlying graph $G$. Two virtual webs are \textbf{equivalent} if they are related by a sequence of plane isotopies and \textbf{virtual Reidemeister moves} as in Fig   \ref{graph1}   .
 
 An immediate observation is:
 \begin{lemma}  \label{l1}
 	If a virtual web has no vertices, then it is (equivalent to) a collection of circles. 
 \end{lemma}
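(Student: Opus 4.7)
The plan is to proceed in two steps. First, examine the abstract structure of the underlying graph $G$: since every vertex of a trivalent graph has degree $3$, the hypothesis that $G$ has no vertices forces each connected component of $G$ to be a topological circle. Consequently $G$ is abstractly a disjoint union $\bigsqcup_{i=1}^n S^1$, and the virtual web is a plane drawing of this as an immersion of circles with a finite number of transverse crossings.

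Second, I would reduce the number of crossings to zero using the virtual Reidemeister moves. The essential tool is the \emph{detour principle}, which follows from these moves: any arc of the diagram may be replaced by any other arc in the plane with the same endpoints, at the cost of introducing new crossings wherever the new arc meets the rest of the diagram. Applying this one circle at a time, I can push each circle into a small disk disjoint from the others, eliminating every crossing. Once there are no crossings, the diagram is an embedded disjoint union of circles in the plane, which up to plane isotopy is the standard collection of circles asserted by the lemma.

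The main (and essentially only) point to check is the detour principle itself, since everything else is combinatorial bookkeeping. It is a standard consequence in virtual knot theory of iterated virtual Reidemeister-II and -III moves combined with plane isotopy, and this is the one step I would justify carefully using the moves pictured in Fig.~\ref{graph1}. Once the detour principle is in hand, the inductive removal of crossings is routine and the lemma follows immediately.
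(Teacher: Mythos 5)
Your argument is correct. The paper offers no proof of this lemma at all --- it is stated as ``an immediate observation'' --- so there is no argument of the author's to compare against; your write-up simply supplies the standard justification. The two ingredients you identify are exactly right: a trivalent graph with no vertices is abstractly a disjoint union of circles, and the detour principle (an arc meeting the rest of the diagram only in crossings can be rerouted arbitrarily, at the cost of new crossings along the new route) lets you isolate the circles one at a time and then remove each circle's self-crossings. Since a virtual web has only one kind of crossing, the hypothesis of the detour move is automatic, and the detour move itself is the standard consequence of iterated virtual RV~II and RV~III moves (with RV~I to kill residual kinks), so the one step you flag as needing care does go through with the moves of Fig.~\ref{graph1}.
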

 
 \begin{figure}[h]
 	\def\svgwidth{\columnwidth}
\begingroup%
  \makeatletter%
  \providecommand\color[2][]{%
    \errmessage{(Inkscape) Color is used for the text in Inkscape, but the package 'color.sty' is not loaded}%
    \renewcommand\color[2][]{}%
  }%
  \providecommand\transparent[1]{%
    \errmessage{(Inkscape) Transparency is used (non-zero) for the text in Inkscape, but the package 'transparent.sty' is not loaded}%
    \renewcommand\transparent[1]{}%
  }%
  \providecommand\rotatebox[2]{#2}%
  \newcommand*\fsize{\dimexpr\f@size pt\relax}%
  \newcommand*\lineheight[1]{\fontsize{\fsize}{#1\fsize}\selectfont}%
  \ifx\svgwidth\undefined%
    \setlength{\unitlength}{1190.5511811bp}%
    \ifx\svgscale\undefined%
      \relax%
    \else%
      \setlength{\unitlength}{\unitlength * \real{\svgscale}}%
    \fi%
  \else%
    \setlength{\unitlength}{\svgwidth}%
  \fi%
  \global\let\svgwidth\undefined%
  \global\let\svgscale\undefined%
  \makeatother%
  \begin{picture}(1,0.38095238)%
    \lineheight{1}%
    \setlength\tabcolsep{0pt}%
    \put(0,0){\includegraphics[width=\unitlength,page=1]{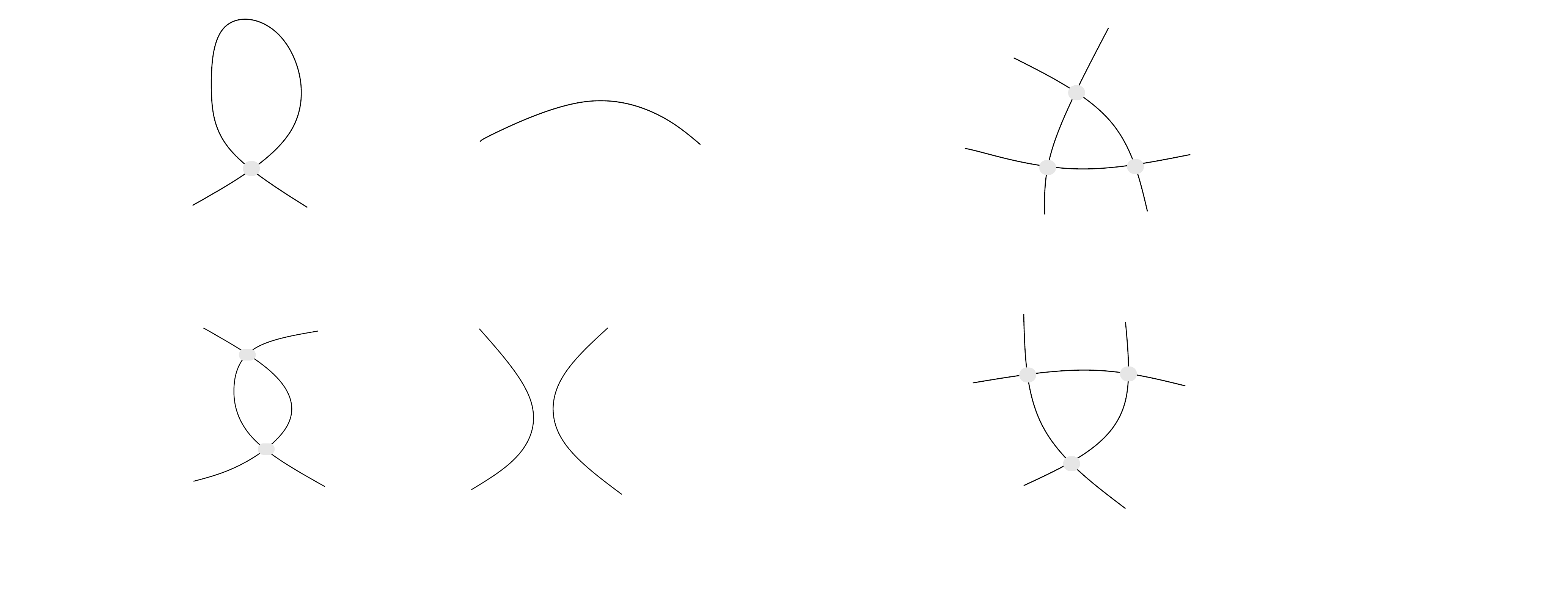}}%
    \put(0.22654274,0.21613617){\color[rgb]{0,0,0}\makebox(0,0)[lt]{\lineheight{1.25}\smash{\begin{tabular}[t]{l}virtual RV I\end{tabular}}}}%
    \put(0.19844017,0.01266588){\color[rgb]{0,0,0}\makebox(0,0)[lt]{\lineheight{1.25}\smash{\begin{tabular}[t]{l}virtual RV II\end{tabular}}}}%
    \put(0.60952613,0.01775676){\color[rgb]{0,0,0}\makebox(0,0)[lt]{\lineheight{1.25}\smash{\begin{tabular}[t]{l}virtual RV III\end{tabular}}}}%
    \put(0,0){\includegraphics[width=\unitlength,page=2]{2.1.pdf}}%
  \end{picture}%
\endgroup%

 	\caption{Virtual Reidemeister moves} 
 	\label{graph1}
 \end{figure}

 From now on when we say a virtual web, we mean an equivalence class of virtual webs. See Fig \ref{graph2}     for 3 different virtual webs for the $\theta$-graph.  Note that the cyclic order of three edges at each vertex is an invariant of a virtual web.

 	\begin{figure}[h]
 	\def\svgwidth{\columnwidth}
 	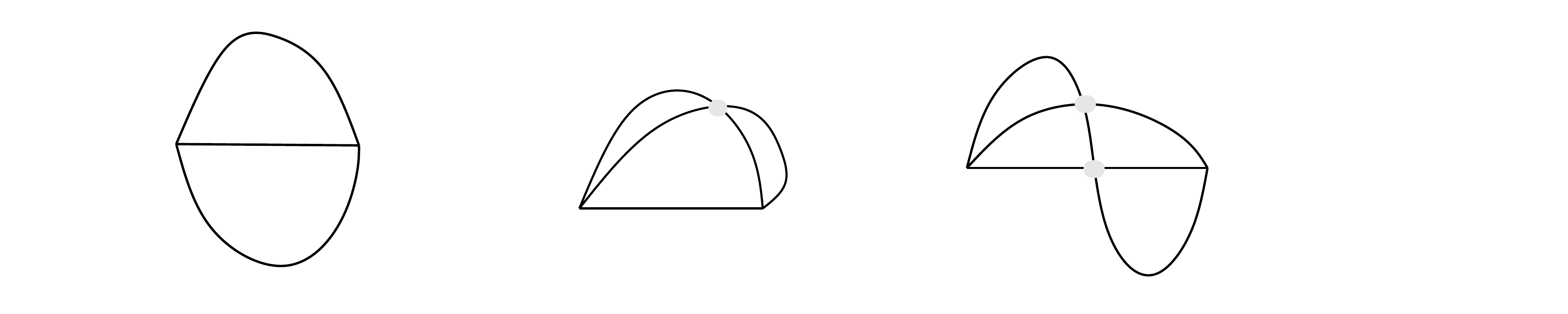
 	\caption{Three virtual webs with underlying graph $\theta$} 
 	\label{graph2}
 \end{figure}

Let $G$ be a trivalent graph, and $D$ is a virtual web for $G$. Denote by $T(G)$ the set of Tait colorings of $G$. Suppose $f \in T(G)$.  
 A vertex $v$ of $D$ is said to be \textbf{positive} with respect to $f$ if the colors incident to $v$ are 1,2,3 in the clockwise order, otherwise it is \textbf{negative}. Let $n^+(f), n^-(f)$ be the number of positive and negative vertices of $D$ with respect to $f$. Define
\begin{equation}
	s_D(f)=\begin{cases}
		1 \text{  \quad if } n^+(f) \equiv n^-(f) \text{ mod 4} \\
		-1 \text{ \quad otherwise}
	\end{cases}
\end{equation}

The \textbf{Penrose number} $P(D)$ of $D$ is defined to be the sum of $s_D(f)$ over all Tait colorings $f$ of $G$. It is obviously invariant under the virtual Reidemeister moves.

\begin{proposition}(\cite{jaeger1989penrose}, Propersition 1)  \label{prop3}
     Let $D$ be a web with underlying graph $G$, then $T(G)=P(D)$.  
\end{proposition}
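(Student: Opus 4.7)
The formula $T(G)=P(D)$ amounts to showing that every Tait coloring $f$ contributes $s_D(f)=+1$, so that $P(D)=\sum_{f\in T(G)}1=|T(G)|$. Since $|V(G)|=n^+(f)+n^-(f)$ is even for a trivalent graph, the difference $n^+(f)-n^-(f)$ is automatically even; the real content is the congruence
\[
n^+(f)-n^-(f)\equiv 0\pmod 4,
\]
which I would establish by exploiting planarity of $D$.

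The plan is to decompose $G$ into bichromatic cycles. For each unordered pair $\{i,j\}\subset\{1,2,3\}$, the edges colored $i$ or $j$ form a $2$-regular subgraph, hence a disjoint union of cycles. Because $D$ has no crossings, each such cycle is an embedded simple closed curve in $\mathbb{R}^2$, to which Whitney's rotation number theorem applies. After isotoping $D$ so that the three edges meet at mutual $120^\circ$ angles at every vertex, each turning angle of a bichromatic cycle at a vertex is $\pm\pi/3$. A short case analysis links the sign of the turning angle at $v$ on the $(i,j)$-cycle through $v$ to (a) the local sign $\sigma_v\in\{\pm 1\}$ of $v$ and (b) the direction in which the cycle is traversed at $v$. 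In particular, at a $+$ vertex the traversals $1\to 2,\ 2\to 3,\ 3\to 1$ each turn counterclockwise by $\pi/3$, whereas at a $-$ vertex they turn clockwise by $\pi/3$.

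Orienting each bichromatic cycle $C$ as the counterclockwise boundary of the disk it bounds, Whitney's identity gives $\sum_{v\in C}\tau_C(v)=2\pi$. Summing this over all bichromatic cycles in all three color pairs and regrouping the result as a vertex-by-vertex tally of turning angles yields an identity of the form $n^+(f)-n^-(f)=4m$ for an integer $m$ built from rotation numbers. This is precisely the content of \cite{jaeger1989penrose}, Proposition~1 (Penrose's original formulation is in \cite{MR0281657}).

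The main obstacle I anticipate is orientation bookkeeping: linking the orientation-dependent turning sign at $v$ to the orientation-free local sign $\sigma_v$ requires tracking all three bichromatic cycles through $v$ simultaneously, and then combining the three Whitney identities so that only the quantity $n^+(f)-n^-(f)$ survives and so that the residual error term is a multiple of $4$ rather than merely of $2$. Once this combinatorial identity is in place, the mod-$4$ congruence, hence $s_D(f)=1$ for every Tait coloring, and therefore $P(D)=|T(G)|=T(G)$, all follow at once.
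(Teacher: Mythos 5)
The paper does not actually prove this proposition; it is imported verbatim from Jaeger's paper (Proposition~1 there), so there is no internal argument to measure yours against. Judged on its own terms, your proposal correctly reduces the statement to the congruence $n^+(f)-n^-(f)\equiv 0\pmod 4$ for every Tait coloring of a crossingless diagram, and your local computation is right: at a positive vertex the traversals $1\to2$, $2\to3$, $3\to1$ each turn by $+\pi/3$. But the step that carries the entire weight of the theorem --- ``summing this over all bichromatic cycles \dots yields an identity of the form $n^+(f)-n^-(f)=4m$'' --- is asserted rather than executed, and as you have set it up it does not go through.

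Two concrete obstructions. First, a plane trivalent graph cannot in general be drawn with straight edges and exact $120^\circ$ angles at every vertex, so after your isotopy the total turning $2\pi$ of a bichromatic cycle splits into vertex contributions of $\pm\pi/3$ \emph{plus} turning accumulated along the curved edges; these edge contributions cancel in the global sum only if the two cycles sharing each edge traverse it in opposite directions, which your choice of orientations (counterclockwise boundary of the bounded disk) does not guarantee. Second, and more seriously: with that choice the turning contributed at a vertex $v$ by the $(i,j)$-cycle is $\sigma_v\,\delta^{ij}_v\cdot\pi/3$, where $\delta^{ij}_v=\pm1$ records whether the counterclockwise traversal runs $i\to j$ or $j\to i$ at $v$. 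The three signs $\delta^{12}_v,\delta^{23}_v,\delta^{31}_v$ are governed by the global geometry of three different curves and are not correlated, so the per-vertex total is $\sigma_v\bigl(\delta^{12}_v+\delta^{23}_v+\delta^{31}_v\bigr)\pi/3$ with $\delta^{12}_v+\delta^{23}_v+\delta^{31}_v\in\{\pm1,\pm3\}$, not $3\sigma_v\pi/3$. Summing the three Whitney identities therefore yields
\begin{equation*}
\sum_v\sigma_v\bigl(\delta^{12}_v+\delta^{23}_v+\delta^{31}_v\bigr)=6\,(c_{12}+c_{13}+c_{23}),
\end{equation*}
where $c_{ij}$ is the number of $(i,j)$-cycles, and $n^+-n^-=\sum_v\sigma_v$ cannot be read off from this modulo $4$ without substantial further input (control of the $\delta$'s, of the number of bichromatic cycles, and of their nesting pattern). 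That missing bookkeeping is precisely the content of Penrose's planarity theorem, so your text is a plan of attack rather than a proof; for the purposes of this paper the correct move is the one the author makes, namely to cite Jaeger (or Penrose/Vigneron) for the complete argument.
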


\begin{proposition}(\cite{jaeger1989penrose}, Propersition 2) \label{prop2}
     Let $D, D', D''$ be three virtual webs which differ only within a small circle as in Fig  \ref{graph3}  . Then $P(D)=P(D')-P(D'')$.
\end{proposition}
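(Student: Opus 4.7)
The plan is to reduce the identity $P(D)=P(D')-P(D'')$ to a purely local statement inside the small disk where the three webs differ, and then to establish that local statement via the Penrose contraction identity
\[
\sum_{m\in\{1,2,3\}}\epsilon_{abm}\,\epsilon_{cdm} \;=\; \delta_{ac}\delta_{bd}-\delta_{ad}\delta_{bc}.
\]

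First, I would label the four boundary points of the disk $p_1,\dots,p_4$ in clockwise order, so that inside the disk $D$ consists of a single virtual crossing joining $p_1$--$p_3$ and $p_2$--$p_4$, $D''$ consists of two non-crossing strands, and $D'$ is the H-smoothing (two trivalent vertices joined by an internal edge). Since the three webs coincide outside the disk, a Tait coloring of any one of them restricts to a coloring of the common ``outside'' edges. Writing $c_i$ for the color of the stub at $p_i$, the sign $s(f)=i^{n^+(f)-n^-(f)}$ factors as $s_{\mathrm{out}}(f)\cdot s_{\mathrm{in}}(f)$, where $s_{\mathrm{out}}$ depends only on the outside coloring together with the boundary data $(c_1,c_2,c_3,c_4)$ and is therefore common to all three webs. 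It thus suffices to prove, for every boundary data $(c_1,c_2,c_3,c_4)\in\{1,2,3\}^{4}$, the local identity
\[
s_{\mathrm{in}}^{D}(c_1,c_2,c_3,c_4) \;=\; s_{\mathrm{in}}^{D'}(c_1,c_2,c_3,c_4) \;-\; s_{\mathrm{in}}^{D''}(c_1,c_2,c_3,c_4),
\]
where $s_{\mathrm{in}}^{X}:=0$ whenever no consistent extension exists.

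Next, I would compute each term. Because $D$ and $D''$ have no interior vertex, $s_{\mathrm{in}}^{D}$ and $s_{\mathrm{in}}^{D''}$ are simply the indicators that the boundary colors agree across the appropriate pairs of strand-ends, i.e.\ they are Kronecker-delta products of the form $\delta_{c_ic_j}\delta_{c_kc_l}$. For $D'$, the two interior vertices force $\{c_1,c_2\}=\{c_3,c_4\}$ to be a $2$-element subset of $\{1,2,3\}$; the internal edge is then forced to carry the unique remaining color $c_m$, and each vertex contributes a factor of $\pm i$ according to whether its clockwise cyclic color order is $(1,2,3)$ or $(1,3,2)$. Hence
\[
s_{\mathrm{in}}^{D'} \;=\; -\,\epsilon_{c_1c_2c_m}\,\epsilon_{c_3c_4c_m}
\]
up to an overall sign dictated by Fig.~\ref{graph3}.

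Finally, since $\epsilon$ vanishes on repeated indices, only the unique forced value of $c_m$ contributes to the formal sum $\sum_{c_m}\epsilon_{c_1c_2c_m}\epsilon_{c_3c_4c_m}$. The Penrose contraction identity then rewrites $s_{\mathrm{in}}^{D'}$ as $\pm(\delta_{c_1c_3}\delta_{c_2c_4}-\delta_{c_1c_4}\delta_{c_2c_3})$, which up to relabeling is exactly $s_{\mathrm{in}}^{D}-s_{\mathrm{in}}^{D''}$ after matching the two delta-products against the $D$ and $D''$ configurations. Summing over all boundary data and all compatible outside colorings, weighted by the common factor $s_{\mathrm{out}}$, gives $P(D)=P(D')-P(D'')$. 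I expect the main obstacle to be purely bookkeeping: matching the drawing conventions of Fig.~\ref{graph3} against the Penrose sign $i^{n^+-n^-}$, so that an incorrect orientation or choice of smoothing does not flip the identity or swap the roles of $D'$ and $D''$.
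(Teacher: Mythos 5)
The paper does not actually prove this proposition --- it is quoted from Jaeger's paper without proof --- so there is no internal argument to compare against; your proposal is in substance the standard proof (and the one Jaeger gives): factor $s(f)=i^{n^+(f)-n^-(f)}$ into an inside and an outside contribution, observe that the outside factor and the set of admissible outside colorings are common to $D,D',D''$, and reduce to the local contraction identity $\sum_{m}\epsilon_{abm}\epsilon_{cdm}=\delta_{ac}\delta_{bd}-\delta_{ad}\delta_{bc}$. That reduction is valid (note in passing that your $s(f)=i^{n^+-n^-}$ agrees with the paper's $\pm1$-valued definition because a closed trivalent graph has an even number of vertices, so $n^+-n^-$ is even), and the local computation you sketch does close up: with boundary points $p_1,\dots,p_4$ in clockwise order and the $H$ having one vertex on $\{p_1,p_2\}$ and one on $\{p_3,p_4\}$, the two factors of $i$ give $\sum_m (i\epsilon_{c_1c_2m})(i\epsilon_{c_3c_4m})=\delta_{c_1c_4}\delta_{c_2c_3}-\delta_{c_1c_3}\delta_{c_2c_4}$, i.e.\ $s^{H}=s^{\text{planar smoothing}}-s^{\text{crossing}}$.

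The one concrete thing to fix is your assignment of the three local pictures. Rearranging the displayed identity gives $s^{\text{crossing}}=s^{\text{planar smoothing}}-s^{H}$, so for the statement $P(D)=P(D')-P(D'')$ to come out with the right sign, $D'$ must be the planar (non-crossing) smoothing and $D''$ the $H$-smoothing; this is also forced by how the paper later uses the proposition, since in the proof of Theorem \ref{thm1} the Euler characteristic satisfies $e(D)=e(D_0)-e(D_1)$ with the $0$-resolution the planar smoothing and the $1$-resolution the $H$. You have declared $D'$ to be the $H$ and $D''$ the planar smoothing, which yields $P(D)=P(D'')-P(D')$, the negative of the claim. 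You flagged exactly this bookkeeping risk yourself, and it does not affect the validity of the method, but in a final write-up the labels must be matched to Fig.~\ref{graph3} and the sign carried through explicitly rather than left as ``up to an overall sign.''
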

Obviously this relation together with the fact that $P(U_n)=3^n$ where $U_n$ is the n-component unlink, uniquely characterize $P$.

	\begin{figure}[h]
	\def\svgwidth{\columnwidth}
\begingroup%
  \makeatletter%
  \providecommand\color[2][]{%
    \errmessage{(Inkscape) Color is used for the text in Inkscape, but the package 'color.sty' is not loaded}%
    \renewcommand\color[2][]{}%
  }%
  \providecommand\transparent[1]{%
    \errmessage{(Inkscape) Transparency is used (non-zero) for the text in Inkscape, but the package 'transparent.sty' is not loaded}%
    \renewcommand\transparent[1]{}%
  }%
  \providecommand\rotatebox[2]{#2}%
  \newcommand*\fsize{\dimexpr\f@size pt\relax}%
  \newcommand*\lineheight[1]{\fontsize{\fsize}{#1\fsize}\selectfont}%
  \ifx\svgwidth\undefined%
    \setlength{\unitlength}{850.39370079bp}%
    \ifx\svgscale\undefined%
      \relax%
    \else%
      \setlength{\unitlength}{\unitlength * \real{\svgscale}}%
    \fi%
  \else%
    \setlength{\unitlength}{\svgwidth}%
  \fi%
  \global\let\svgwidth\undefined%
  \global\let\svgscale\undefined%
  \makeatother%
  \begin{picture}(1,0.26666667)%
    \lineheight{1}%
    \setlength\tabcolsep{0pt}%
    \put(0,0){\includegraphics[width=\unitlength,page=1]{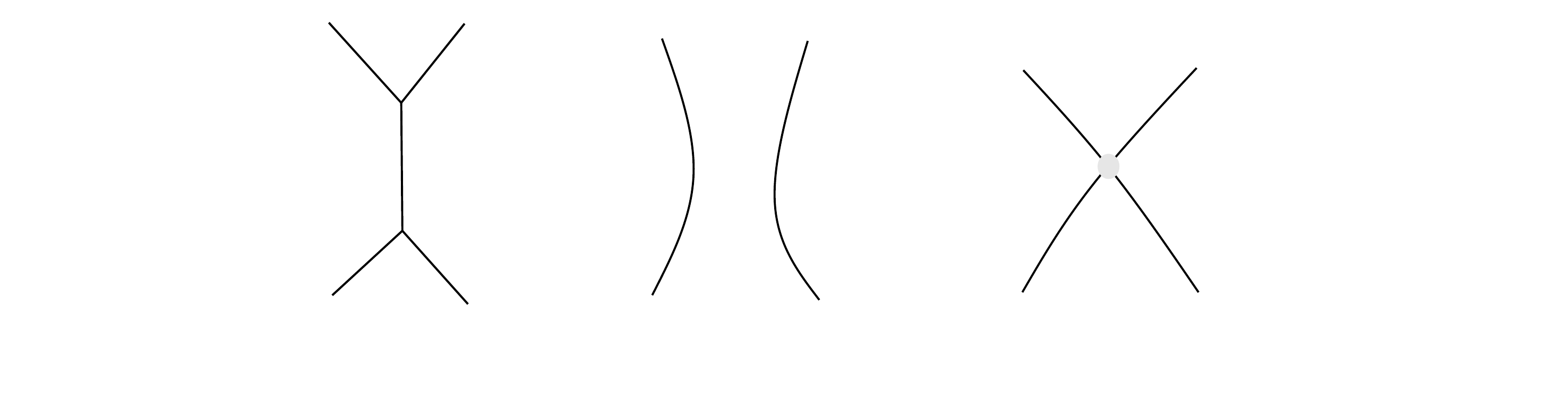}}%
    \put(0.2367407,0.01893135){\color[rgb]{0,0,0}\makebox(0,0)[lt]{\lineheight{1.25}\smash{\begin{tabular}[t]{l}D\end{tabular}}}}%
    \put(0.45768346,0.0184859){\color[rgb]{0,0,0}\makebox(0,0)[lt]{\lineheight{1.25}\smash{\begin{tabular}[t]{l}D'\end{tabular}}}}%
    \put(0.69778066,0.01893135){\color[rgb]{0,0,0}\makebox(0,0)[lt]{\lineheight{1.25}\smash{\begin{tabular}[t]{l}D"\end{tabular}}}}%
  \end{picture}%
\endgroup%

	\caption{Local relation of Penrose number} 
	\label{graph3}
\end{figure}

\section{Local relations of the functor $J$}  \label{sec3}

   A \textbf{closed pre-foam} $F$ is a compact 2-dimensional CW-complex such that each point has an open neighborhood that is either an open disk, the product of a tripod and an open interval, or the cone over the 1-skeleton of a tetrahedron. The subspace $s(F)$ of points of tha last two kinds is a four-valent graph. Closed prefoams can be decorated by finite number of points(called dots) on their facets. These dots can float on facets but cannot cross $s(F)$.
   
   A \textbf{closed  foam} $F$ is a closed pre-foam together with a piece-wise linear embedding into $\mathbb{R}^4$. A 3-dimensional space $T \cong \mathbb{R}^3 \subset \mathbb{R}^4 $  intersects a closed foam $F$ generically if $F \cap T$ is a trivalent graph $G$ in $T$, and for a tubular neighborhoood $N$ of $T$, $(N \cap F, N)$ is piecewise-linearly homeomorphic to $(G \times (-\epsilon, \epsilon), \mathbb{R}^3 \times (-\epsilon, \epsilon))$.
   
   A \textbf{foam}(with boundary) is the intersection of a closed foam $F$ with $\mathbb{R}^3 \times [0,1]$ in $\mathbb{R}^4$ such that $F$ generically intersects $\mathbb{R}^3 \times \{0\}$ and $\mathbb{R}^3 \times \{1\}$. We can view a foam $F$ as a cobordism from $\partial_0F=F \cap \mathbb{R}^3 \times \{0\}$ to $\partial_1F=F \cap \mathbb{R}^3 \times \{1\}$. In particular, a closed foam is a foam withoout boundary and is a cobordism from the empty graph to itself. 
   
   If $F, G$ are 2 foams such that the webs $\partial_0F, \partial_1G$ are identical, define the composition $GF$ by concatenating along their common boundary. In this way we obtain a category $Foams$, with webs as objects and isomorphism classes of foams with boundary as morphisms.
   
   In \cite{MR3880205}, Kronheimer and Mrowka defined a functor 
   \begin{equation}
   	J^\# :  Foams \longrightarrow Vect_\mathbb{F}
   \end{equation}
where $Vect_\mathbb{F}$ is the category of vector spaces over the 2-elements field $\mathbb{F}$. We list the properties of $J^\#$ we need afterwards.

\begin{proposition}
	For the empty graph $\emptyset$, $J(\emptyset)= \mathbb{F}$.
\end{proposition}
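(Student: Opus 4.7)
The plan is to unwrap the definition of $J^\# = J$ from Kronheimer--Mrowka \cite{MR3880205} in the degenerate case where the web is empty. Recall that for a web $G \subset S^3$, $J^\#(G)$ is defined as a version of framed $SO(3)$ instanton Floer homology associated to the pair $(S^3, G)$, formed after an auxiliary connect-sum designed to rule out reducible connections. When $G = \emptyset$ there is no singular locus contributed by the graph, so the construction collapses to the framed instanton Floer homology $I^\#(S^3)$ of the 3-sphere with $\mathbb{F}$ coefficients.

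The claim therefore reduces to the foundational computation $I^\#(S^3) \cong \mathbb{F}$, which is one of the basic inputs of Kronheimer--Mrowka's framed instanton theory. Concretely, for the auxiliary closed 3-manifold used in the construction, the moduli space of irreducible flat $SO(3)$-connections is a single Bott--Morse family whose Morse--Bott cohomology over $\mathbb{F}$ is one-dimensional. This identification is recorded directly in \cite{MR3880205} as part of the list of defining properties of $J^\#$, so no independent argument is required here beyond a citation.

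There is essentially no mathematical obstacle in this proof: the only step that deserves care in a formal write-up is checking that the normalization and conventions being used for $J$ match those in \cite{MR3880205}, so that the empty web (which acts as the monoidal unit under disjoint union) is sent to the monoidal unit $\mathbb{F}$ of $\mathit{Vect}_{\mathbb{F}}$. Once that verification is made, the proposition follows by a direct appeal to the cited construction.
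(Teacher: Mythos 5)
Your proposal is correct and is essentially what the paper does: the paper gives no proof at all, simply listing $J(\emptyset)=\mathbb{F}$ among the properties of $J^\#$ imported from \cite{MR3880205}, which is exactly the citation your argument reduces to. The sketch of the underlying computation ($I^\#(S^3)$ over $\mathbb{F}$) is a reasonable gloss but is not needed for the paper's purposes.
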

If $F$ is a closed foam, then $J(F): \mathbb{F} \longrightarrow \mathbb{F}$ is characterized by the image of $1 \in \mathbb{F} \cong J(\emptyset)$. Still denote this scalar by $J(F)$.

\begin{proposition}(\cite{MR3880205}, Prop. 5.1)
	For a circle $U$, $J(\emptyset)= \mathbb{F}^3$.
\end{proposition}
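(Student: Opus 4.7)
The plan is to compute $J(U)$ directly from the instanton-theoretic definition of \cite{MR3880205}. By construction, $J(U)$ is the $\mathbb{F}$-valued Floer homology of a Chern--Simons type functional whose critical set is a moduli space of flat $SO(3)$ orbifold connections on the pair $(S^3,U)$, with prescribed cone angle $\pi$ along the unknot together with the auxiliary framing data that defines $J^\#$.

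First I would identify the critical manifold explicitly. Because $\pi_1(S^3 \setminus U)=\mathbb{Z}$, the relevant representations send a meridian to a fixed order-two conjugacy class in $SO(3)$, and the resulting variety (after incorporating the auxiliary circle built into $J^\#$) can be identified, modulo gauge, with a closed surface $\mathcal{M}$ whose total mod-$2$ Betti number equals $3$ (morally $\mathcal{M}\cong\mathbb{RP}^2$, whose $\mathbb{F}$-cohomology is $\mathbb{F}[x]/(x^3)$).

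Second I would run a Morse--Bott argument: the spectral sequence computing $J(U)$ from the critical manifold has $E_2$-page $H^*(\mathcal{M};\mathbb{F})$, and I would show that it collapses. Once the collapse is in hand, one concludes $J(U)\cong H^*(\mathcal{M};\mathbb{F})$ of total dimension $3$. As a sanity check, the three resulting generators should be realized geometrically by the cobordism maps associated to the undotted, once-dotted and twice-dotted disk $\emptyset\to U$, which is consistent with $J$ recovering the $sl_3$ Frobenius algebra on the circle.

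The main obstacle is the collapse step. For a general web one expects genuine Floer differentials, and collapse here is a feature of the particularly simple topology of the unknot: one has to verify that no Fredholm-index-one instanton trajectory on the cylinder connects distinct critical loci in $\mathcal{M}$, which I would attempt either via a parity argument on relative indices or by directly ruling out bubbling/reducibles in the low-energy moduli, using the holonomy perturbation scheme of \cite{MR3880205}. Once this technical input is established, the identification $\dim_{\mathbb{F}} J(U)=3$ is immediate.
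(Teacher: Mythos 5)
The paper does not actually prove this statement: it is imported verbatim from Kronheimer--Mrowka (\cite{MR3880205}, Prop.\ 5.1) and used as a black box, so there is no internal proof to compare against and any argument you give is by definition a different route. (Note also that the statement as printed contains a typo --- it should read $J(U)=\mathbb{F}^3$ --- which you have correctly interpreted.)

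Your framework --- identify the critical set of the Chern--Simons functional with a representation variety, observe that the meridian of $U$ must map to an involution so that the critical set is a copy of $\mathbb{RP}^2$ with total $\mathbb{F}$-Betti number $3$, and run a Morse--Bott spectral sequence --- is the right general setting. But the step you yourself flag as ``the main obstacle'', the collapse of the spectral sequence, is a genuine gap: you offer only candidate strategies (parity of relative indices, ruling out low-energy trajectories) without carrying any of them out, and neither is routine once holonomy perturbations are switched on. There is also a secondary gap in the identification of $\mathcal{M}$: after adjoining the auxiliary data defining the $\#$ construction, the auxiliary meridians generate a Klein four-group whose centralizer in $SO(3)$ is nontrivial and acts on the $\mathbb{RP}^2$ of axes, and you do not explain why the relevant critical set is still $\mathbb{RP}^2$ rather than a quotient with smaller Betti numbers. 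The efficient repair is to decouple the two bounds. The Morse--Bott (or perturbed Morse) picture already gives the upper bound $\dim J(U)\le \dim H^*(\mathcal{M};\mathbb{F})=3$ with no collapse statement needed, since the homology is a subquotient of the $E_1$ page. For the lower bound, your closing ``sanity check'' is actually the proof: the three cup foams (disks $\emptyset\to U$ with $0$, $1$, $2$ dots) pair against the three cap foams via closed-sphere evaluations, and since the sphere with exactly two dots evaluates to $1$ while all other dotted spheres evaluate to $0$, the $3\times 3$ pairing matrix is anti-diagonal and invertible, so the three classes are linearly independent and $\dim J(U)\ge 3$. Together these two bounds force the collapse rather than requiring you to establish it by analyzing instanton trajectories.
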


\begin{proposition}(\cite{MR3880205}, Prop. 5.2)
	If a foam $F$ has 3 or more dots on one of its facets, then $J(F)=0.$  
\end{proposition}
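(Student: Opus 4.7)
The plan is to reduce the claim to a local computation inside $J(U)\cong \mathbb{F}^3$. Choose a small closed $3$-ball $B$ in the ambient slice $\mathbb{R}^3\times[0,1]$ whose intersection with the given facet is a small disk $D$ containing exactly the three prescribed dots, with $\partial D$ a circle $U$ disjoint from the rest of $F$. Then, up to isomorphism in $\mathrm{Foams}$, the foam $F$ splits as a composition $F = F'\circ F''$, where $F''$ is the disk $D$ carrying the three dots, viewed as a morphism $\emptyset \to U$, and $F'$ is the complementary foam. By functoriality of $J$, the linear map $J(F)$ factors through the vector $v_3 := J(F'')(1)\in J(U)$, so it suffices to prove $v_3 = 0$.

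Next I would introduce the \emph{dot operator} $X:J(U)\to J(U)$ induced by the cylinder $U\times[0,1]$ carrying a single dot on its facet; functoriality of $J$ and repeated stacking of such cylinders give $v_k = X^k v_0$, where $v_0$ is the class of the undotted disk. The goal becomes $X^3 v_0 = 0$ in $J(U)$.

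To finish, I would exploit the Frobenius-algebra structure on $J(U)$ arising from the pair-of-pants foams: multiplication $m:J(U)\otimes J(U)\to J(U)$ and counit $\epsilon:J(U)\to \mathbb{F}$ (the capping disk). Under this structure $v_0$ is the unit $1$, the dot operator is multiplication by the element $X := v_1 \in J(U)$, and the closed sphere with $k$ dots evaluates to $\epsilon(X^k)$. Since $\dim J(U) = 3$, Cayley--Hamilton produces a monic cubic relation for $X$; to pin that relation down as $X^3 = 0$ I would combine the nondegeneracy of the Frobenius pairing $(a,b)\mapsto \epsilon(ab)$ with the sphere evaluations $\epsilon(1) = \epsilon(X) = 0$ and $\epsilon(X^2) = 1$, forcing $J(U)\cong \mathbb{F}[X]/(X^3)$.

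The main obstacle is that last step. The reduction to a dotted disk and the formal manipulation of the dot operator are essentially formal consequences of functoriality, but the identification $J(U)\cong \mathbb{F}[X]/(X^3)$ requires extra input: both the precise sphere evaluations and the nondegeneracy of the Frobenius pairing. These cannot be extracted from functoriality and the bare dimension count alone, and must ultimately be imported from Kronheimer--Mrowka's gauge-theoretic calculations on closed foams.
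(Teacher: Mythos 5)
The paper does not actually prove this statement: it is quoted verbatim from Kronheimer--Mrowka (\cite{MR3880205}, Prop.\ 5.2), where it is established by a direct argument with their gauge-theoretic definition of the dot operators. So your proposal is necessarily a different route --- a formal derivation from the circle's dimension and closed-surface evaluations --- and its first two steps are sound: factoring off a small dotted disk and using functoriality plus multiplicativity under disjoint union does reduce the claim to showing $v_3=X^3v_0=0$ in $J(U)\cong\mathbb{F}^3$, and the values $\epsilon(1)=\epsilon(X)=0$, $\epsilon(X^2)=1$ already force $1,X,X^2$ to be linearly independent (the Gram matrix is anti-triangular with $1$'s on the anti-diagonal, hence nonsingular whatever $\epsilon(X^3),\epsilon(X^4)$ are), so they form a basis.

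The gap is in the final step. Nondegeneracy of the pairing together with $\epsilon(1)=\epsilon(X)=0$ and $\epsilon(X^2)=1$ does \emph{not} force $J(U)\cong\mathbb{F}[X]/(X^3)$: the algebra $\mathbb{F}[X]/(X^3+1)$ over the two-element field, with the same counit, is a Frobenius algebra with nondegenerate pairing satisfying every constraint you list, and there $X^3=1\neq 0$. Writing $X^3=a+bX+cX^2$ and pairing against the basis, one finds $c=\epsilon(X^3)$, $b=\epsilon(X^4)+\epsilon(X^3)^2$, etc., so what you actually need are the further evaluations $\epsilon(X^3)=\epsilon(X^4)=0$, i.e.\ the vanishing of the sphere with three and with four dots. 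But a sphere with three dots is itself a foam with three dots on a facet --- a special case of the very proposition you are proving --- and in Kronheimer--Mrowka the sphere evaluations are computed directly only for $k\le 2$ dots, the cases $k\ge 3$ resting on Proposition 5.2. So the argument is circular at exactly the point where it has to do real work. To repair it you must either establish the $k=3,4$ sphere evaluations independently of the three-dot relation, or abandon the formal route and fall back on the original gauge-theoretic proof.
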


\begin{proposition}(\cite{MR3880205}, Corollary 4.4)
	If $W=W_1 \sqcup W_2$ is a disjoint unnion of 2 webs, then $J(W)= J(W_1) \otimes J(W_2).$
\end{proposition}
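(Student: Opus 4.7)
The plan is to exploit the fact that $W_1$ and $W_2$ can be separated by an embedded 2-sphere $S \subset \mathbb{R}^3$, with $W_1$ lying inside the ball bounded by $S$ and $W_2$ lying outside, and then to invoke an excision/Künneth-type principle for the underlying singular instanton Floer homology. The sphere $S$ meets no edges of $W$, so it gives a genuine splitting of the pair $(\mathbb{R}^3, W)$ as a connect sum of $(\mathbb{R}^3, W_1)$ and $(\mathbb{R}^3, W_2)$ along standard $(S^3, \emptyset)$.

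First, I would recall Kronheimer--Mrowka's definition: $J^\#(W)$ is the instanton Floer homology of a singular bundle on an auxiliary closed 3-manifold $Z(W)$ built from $(\mathbb{R}^3, W)$ together with a standard stabilizing configuration (a Hopf-link/$\theta$-graph summand) that kills reducibles and gives $J^\#(\emptyset) = \mathbb{F}$. Using the separating sphere $S$, one can arrange that $Z(W)$ is diffeomorphic (as an orbifold/singular pair) to the connect sum $Z(W_1) \# Z(W_2)$, with the stabilizing configuration attached on one side only and a trivial $B^3$ glued on the other.

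Next, I would apply the connect-sum/Künneth formula for instanton Floer homology. In the presence of $\mathbb{F}$-coefficients this takes a particularly clean form: for a separating $S^2$ across which the singular set does not cross and over which $J^\#$ of the standard sphere equals $\mathbb{F}$, the cobordism $S^2 \times I$ induces the identity on $\mathbb{F} \otimes \mathbb{F} \cong \mathbb{F}$, and a neck-stretching argument along $S$ factors the Floer chain complex of $Z(W)$ as a tensor product of the Floer chain complexes of $Z(W_1)$ and $Z(W_2)$. Passing to homology and using the fact that $\mathbb{F}$ is a field so the algebraic Künneth theorem has no $\mathrm{Tor}$-terms gives the desired isomorphism $J(W) \cong J(W_1) \otimes J(W_2)$.

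The main obstacle is the rigorous verification of the connect-sum/excision formula at the level of singular ASD moduli spaces: one must check that the moduli spaces over $Z(W_1) \# Z(W_2) \times \mathbb{R}$ decompose as expected under neck-stretching, that the reducible connections on the $S^3$ neck contribute only the identity, and that the auxiliary stabilizing data does not obstruct the tensor-product decomposition. In the Kronheimer--Mrowka framework this is handled by their general TQFT-style axioms for $J^\#$ in Section 4, where cylinder-type cobordisms on $S^2$ act as identities, reducing the statement to the formal observation that disjoint union is a monoidal structure on $Foams$ and $J^\#$ is a monoidal functor with unit $J^\#(\emptyset) = \mathbb{F}$.
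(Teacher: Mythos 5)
This proposition is not proved in the paper at all: it is quoted verbatim from Kronheimer--Mrowka (\cite{MR3880205}, Corollary 4.4) as one of the black-box properties of $J^\#$ used later, so there is no in-paper argument to compare against. Judged as a reconstruction of the original proof, your proposal has a genuine gap at its central step. The tool you invoke --- a ``connect-sum/K\"unneth formula'' for instanton Floer homology obtained by neck-stretching along a separating $2$-sphere --- does not exist in the form you need. On an $S^3$ neck the only flat connection is the trivial one, which is reducible; this is exactly the situation in which gluing theory for ASD moduli spaces degenerates, and connected-sum formulas in Donaldson/Floer theory along spheres carry correction terms rather than yielding a clean tensor-product decomposition of the chain complex. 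Asserting that ``the reducible connections on the $S^3$ neck contribute only the identity'' is precisely the point that fails, and working over the field $\mathbb{F}$ does not repair it: the absence of $\mathrm{Tor}$-terms is irrelevant if the chain-level splitting is not available in the first place.

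The mechanism Kronheimer and Mrowka actually use is Floer's excision theorem applied along \emph{tori}, not spheres: each of $W_1$ and $W_2$ is closed up with its own auxiliary marking data (the Hopf link with the arc $\omega$ in the $\#$-construction), and the excision surfaces are chosen so that the $SO(3)$ bundle is nontrivial on them, which forces the relevant flat connections on the neck to be irreducible and makes the gluing argument clean. That is what produces $J^\#(W_1\sqcup W_2)\cong J^\#(W_1)\otimes J^\#(W_2)$. Your closing sentence, which reduces the claim to ``disjoint union is a monoidal structure on $Foams$ and $J^\#$ is a monoidal functor,'' is circular: multiplicativity under disjoint union \emph{is} the assertion that $J^\#$ is monoidal, so it cannot be taken as an axiom here. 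If you want a self-contained proof you must either carry out the torus excision argument or cite it; the sphere-splitting picture, however geometrically natural, is not the route that works.
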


\begin{corollary}\label{c1}
	If $W$ is a disjoint union of $k$ circles, then $J(F)= \mathbb{F}^{3k}$.
\end{corollary}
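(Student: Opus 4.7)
The plan is a short induction on the number of circles $k$, driven entirely by the two immediately preceding propositions. The base case $k = 1$ is literally Proposition 5.1, which identifies $J$ on a single circle. For the inductive step, I would split $W$ as $W' \sqcup U$, where $W'$ consists of $k-1$ circles and $U$ is a single circle, and then invoke the disjoint union property of Proposition 5.3 to rewrite $J(W)$ as $J(W') \otimes J(U)$. Feeding the inductive hypothesis and the single-circle computation into the right-hand side produces the value of $J(W)$ recorded in the statement.

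There is essentially no mathematical obstacle here, since the corollary is a purely formal consequence of the two immediately preceding propositions. The whole proof collapses into a couple of lines of induction wrapping around the tensor product identity, with no case analysis and no appeal to any further properties of $J$. In the writeup I would simply state the induction, dispatch the base case with a pointer to Proposition 5.1, handle the inductive step with a single application of Proposition 5.3, and read off the conclusion.

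If anything deserves a sentence of attention, it is a sanity check on the dimension bookkeeping: one should verify that iterating the tensor product formula $k-1$ times against $J(U)$ indeed assembles into the precise expression $\mathbb{F}^{3k}$ appearing in the statement, and if any normalization constant is hidden in the empty case $k = 0$ (where $J(\emptyset) = \mathbb{F}$) one should reconcile it with that formula before closing the argument. Beyond this small quantitative check, the corollary needs no further work.
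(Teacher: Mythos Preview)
Your approach is correct and matches the paper, which states the corollary without proof as an immediate consequence of the two preceding propositions. Your sanity check is well-placed and in fact catches a typo: iterating the tensor product gives $(\mathbb{F}^3)^{\otimes k} \cong \mathbb{F}^{3^k}$, not $\mathbb{F}^{3k}$ as printed, and the paper itself uses $\dim J(U^k) = 3^k$ later in the proof of the main theorem, confirming that the intended exponent is $3^k$.
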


\begin{proposition}(\cite{MR3880205}, Prop 5.3, Prop 6.2)
	If $F$ is an unknotted closed 2-manifold, then
	\begin{equation}
		J(F)=\left\{ \begin{aligned}
			1 & \quad F \text{ is the sphere with 2 dots, or the torus without dots} \\
			0 & \quad \text{ otherwise}
		\end{aligned}
	\right.
	\end{equation}
\end{proposition}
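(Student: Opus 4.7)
The plan is to reduce the evaluation $J(F)$ for an arbitrary unknotted closed surface to an algebraic computation in the Frobenius algebra structure that $J$ induces on $J(U)$, where $U$ is a single circle.

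First I would identify $J(U) \cong \mathbb{F}[X]/(X^3)$ as an algebra. The undotted disk, viewed as a cobordism $\emptyset \to U$, gives a unit $1 \in J(U)$; the pair-of-pants $U \sqcup U \to U$ gives a commutative, associative multiplication $m$; and placing a dot on a cylinder gives an endomorphism $X$. The dot-vanishing relation forces $X^3 = 0$, while $\dim J(U) = 3$ forces $1, X, X^2$ to be a basis, yielding the claimed algebra identification.

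Next I would determine the counit $\epsilon: J(U) \to \mathbb{F}$ given by the cap $U \to \emptyset$. Composing cups with caps produces spheres: the sphere with $n$ dots evaluates to $\epsilon(X^n)$. Since the Frobenius pairing $(x,y) \mapsto \epsilon(xy)$ must be nondegenerate (the pair-of-pants and its reverse realize $J(U)$ as self-dual), one forces $\epsilon(X^2) = 1$, while $\epsilon(1) = \epsilon(X) = 0$ either follow from direct sphere evaluations or by an argument that a nonzero value of $\epsilon$ on $1$ or $X$ is incompatible with the dot relation on closed spheres. This already disposes of the $S^2$ case: the sphere with $n$ dots is $0$ unless $n = 2$.

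For positive genus, the standard TQFT decomposition of $\Sigma_g$ into a cup, $2g$ pair-of-pants assembled into $g$ handles, and a cap gives
\begin{equation}
J(\Sigma_g \text{ with } n \text{ dots}) = \epsilon(X^n h^g),
\end{equation}
where $h = m \circ \Delta(1) = \sum_i e_i \hat{e}_i$ is the handle element for a self-dual basis. With respect to our $\epsilon$, the basis $\{1, X, X^2\}$ has dual basis $\{X^2, X, 1\}$, so $h = 3X^2 = X^2$ in characteristic $2$. Therefore $J(\Sigma_g \text{ with } n \text{ dots}) = \epsilon(X^{n+2g})$, which is nonzero iff $n + 2g = 2$. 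The only solutions are $(g,n) = (0,2)$ and $(g,n) = (1,0)$, both evaluating to $1$, while all higher-genus surfaces vanish because $n + 2g \geq 4 \geq 3$.

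The main obstacle is verifying that $J$, restricted to unknotted surfaces in $\mathbb{R}^3 \subset \mathbb{R}^4$, really defines a commutative Frobenius functor: one must check that the maps induced by cups, caps, and pair-of-pants satisfy the full TQFT axioms, in particular the Frobenius compatibility between $m$ and its coproduct, using only the propositions cited in the excerpt and the standard handle decompositions of unknotted surfaces. Once this structural step is established, the computation above is purely algebraic.
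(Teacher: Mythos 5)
First, note that the paper offers no proof of this proposition at all: it is imported verbatim from Kronheimer--Mrowka (Propositions 5.3 and 6.2 of \cite{MR3880205}), where the sphere evaluations come out of the instanton-homology definition of $J^\#$ and the torus from a trace/dimension-mod-2 argument. So your Frobenius-algebra derivation is necessarily a different route from anything in this paper. The genus-reduction part of it is correct and clean: granting $J(U)\cong\mathbb{F}[X]/(X^3)$ with counit $\epsilon(X^n)=\delta_{n,2}$, the handle element is $h=1\cdot X^2+X\cdot X+X^2\cdot 1=3X^2=X^2$ over $\mathbb{F}$, and $\epsilon(X^{n+2g})$ is nonzero exactly for $(g,n)=(0,2)$ and $(1,0)$.

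The genuine gap is in how you pin down the algebra and the counit. The claim that $\dim J(U)=3$ ``forces $1,X,X^2$ to be a basis'' is false as stated: $X^3=0$ together with $\dim J(U)=3$ is perfectly compatible with $1,X,X^2$ being linearly dependent and the rest of $J(U)$ lying outside the subalgebra generated by the dot. For the same reason, nondegeneracy of the Frobenius pairing (which \emph{is} automatic from the zig-zag identities) does not force $\epsilon(X^2)=1$: if $1,X,X^2$ spanned only a proper subspace, $\epsilon$ could vanish on all of it without violating nondegeneracy on $J(U)$. The missing ingredient is the neck-cutting relation (Proposition 6.1 of \cite{MR3880205}, reproduced in Section 3 of this paper). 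Applied to the identity cobordism of $U$ it gives $y=\sum_{i+j=2}\epsilon(X^jy)\,X^i$ for every $y\in J(U)$, so $1,X,X^2$ span and hence form a basis; it also shows the cup class $1$ is nonzero (otherwise $\mathrm{id}_{J(U)}=0$), and then $1=\sum_{i+j=2}S^{j}X^i=\epsilon(X^2)\cdot 1$ forces $\epsilon(X^2)=1$ once one knows $\epsilon(1)=\epsilon(X)=0$. Those last two values should themselves be derived from neck-cutting applied to dotted spheres ($S^1=2S^1S^2=0$ and $S^0=(S^1)^2=0$ over $\mathbb{F}$, using the three-dot relation), not from ``direct sphere evaluations,'' which would be circular since they are part of the statement being proved. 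With these repairs your argument closes using only the properties listed in Section 3 together with monoidality of $J$ on morphisms, which is the structural point you correctly flag as the remaining obstacle.
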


\begin{proposition} (\cite{MR3880205}, Prop 6.1) The foam evaluation satisfies the neck-cutting relation
	\begin{figure}[h] \label{6.1}
		\def\svgwidth{\columnwidth}
\begingroup%
  \makeatletter%
  \providecommand\color[2][]{%
    \errmessage{(Inkscape) Color is used for the text in Inkscape, but the package 'color.sty' is not loaded}%
    \renewcommand\color[2][]{}%
  }%
  \providecommand\transparent[1]{%
    \errmessage{(Inkscape) Transparency is used (non-zero) for the text in Inkscape, but the package 'transparent.sty' is not loaded}%
    \renewcommand\transparent[1]{}%
  }%
  \providecommand\rotatebox[2]{#2}%
  \newcommand*\fsize{\dimexpr\f@size pt\relax}%
  \newcommand*\lineheight[1]{\fontsize{\fsize}{#1\fsize}\selectfont}%
  \ifx\svgwidth\undefined%
    \setlength{\unitlength}{1133.85826772bp}%
    \ifx\svgscale\undefined%
      \relax%
    \else%
      \setlength{\unitlength}{\unitlength * \real{\svgscale}}%
    \fi%
  \else%
    \setlength{\unitlength}{\svgwidth}%
  \fi%
  \global\let\svgwidth\undefined%
  \global\let\svgscale\undefined%
  \makeatother%
  \begin{picture}(1,0.2)%
    \lineheight{1}%
    \setlength\tabcolsep{0pt}%
    \put(0,0){\includegraphics[width=\unitlength,page=1]{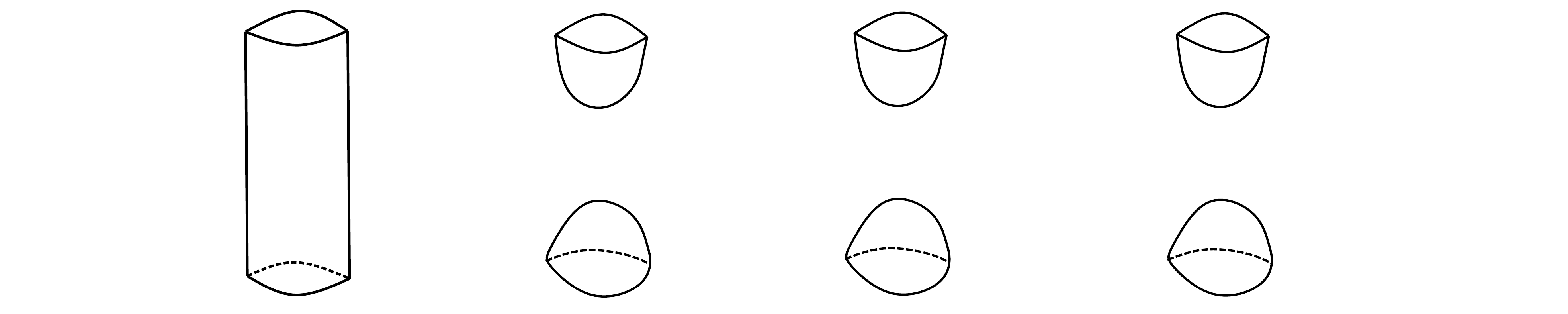}}%
    \put(0.26796171,0.0949722){\color[rgb]{0,0,0}\makebox(0,0)[lt]{\lineheight{1.25}\smash{\begin{tabular}[t]{l}=\end{tabular}}}}%
    \put(0.4594059,0.09675541){\color[rgb]{0,0,0}\makebox(0,0)[lt]{\lineheight{1.25}\smash{\begin{tabular}[t]{l}+\end{tabular}}}}%
    \put(0.65691697,0.09853863){\color[rgb]{0,0,0}\makebox(0,0)[lt]{\lineheight{1.25}\smash{\begin{tabular}[t]{l}+\end{tabular}}}}%
    \put(0.37439571,0.14572039){\color[rgb]{0,0,0}\makebox(0,0)[lt]{\lineheight{1.25}\smash{\begin{tabular}[t]{l}..\end{tabular}}}}%
    \put(0.57136102,0.14874838){\color[rgb]{0,0,0}\makebox(0,0)[lt]{\lineheight{1.25}\smash{\begin{tabular}[t]{l}.\end{tabular}}}}%
    \put(0.56970313,0.05245474){\color[rgb]{0,0,0}\makebox(0,0)[lt]{\lineheight{1.25}\smash{\begin{tabular}[t]{l}.\end{tabular}}}}%
    \put(0.77289531,0.05115406){\color[rgb]{0,0,0}\makebox(0,0)[lt]{\lineheight{1.25}\smash{\begin{tabular}[t]{l}..\end{tabular}}}}%
  \end{picture}%
\endgroup%
 
		\caption{The neck-cutting relation}
		\label{graph6.1}
	\end{figure}
\end{proposition}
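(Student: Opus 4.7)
The plan is to apply the functor $J$ to both sides of the pictured relation and prove the resulting equality of morphisms in $\mathrm{Vect}_\mathbb{F}$. Let $U$ be the circle along which the neck is cut, and write $V := J(U)$, which is $\mathbb{F}^3$ by the stated proposition. The left-hand side is $J$ applied to the cylinder $U \times [0,1]$, which is the identity cobordism $U \to U$, so by functoriality it equals $\mathrm{id}_V$. On the right-hand side, each of the three terms is a composition of a disk capping $U$ from below (a cobordism $\emptyset \to U$) with a disk capping $U$ from above (a cobordism $U \to \emptyset$), each decorated with some dots. Write $\alpha_i : \mathbb{F} \to V$ for $J$ of the lower cap with $i$ dots, and $\beta_i : V \to \mathbb{F}$ for $J$ of the upper cap with $i$ dots; then applying $J$ to the right-hand side gives the endomorphism $\sum_{i=0}^{2} \alpha_i \circ \beta_{2-i}$ of $V$.

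The pivotal computation is that the scalar $\beta_i \circ \alpha_j$ is equal to $J(S^2_{i+j})$, where $S^2_k$ denotes the unknotted 2-sphere carrying $k$ dots, because gluing the two caps along $U$ produces exactly such a sphere. By the earlier proposition on closed unknotted 2-manifolds (together with the vanishing whenever a facet bears at least three dots), this evaluates over $\mathbb{F}$ to $\delta_{i+j,\,2}$. Hence the $3 \times 3$ matrix $\bigl(\beta_i \circ \alpha_j\bigr)_{0 \le i,j \le 2}$ is the anti-diagonal of $1$'s, which is invertible. If $\sum_{j} c_j \alpha_j = 0$ in $V$, then applying $\beta_k$ for each $k \in \{0,1,2\}$ forces $c_{2-k} = 0$; so $\alpha_0, \alpha_1, \alpha_2$ are linearly independent, and since $\dim V = 3$ they form a basis.

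It remains to evaluate the right-hand endomorphism on this basis. A direct computation gives
\begin{equation}
\left( \sum_{i=0}^{2} \alpha_i \circ \beta_{2-i} \right) \circ \alpha_j \;=\; \sum_{i=0}^{2} \alpha_i \cdot \delta_{(2-i)+j,\,2} \;=\; \sum_{i=0}^{2} \alpha_i \cdot \delta_{i,j} \;=\; \alpha_j,
\end{equation}
which agrees with $\mathrm{id}_V \circ \alpha_j$. Since the $\alpha_j$ span $V$, the two endomorphisms coincide, establishing the neck-cutting identity after applying $J$.

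The main conceptual step, and essentially the only nontrivial one, is recognising that a three-term decomposition is forced and sufficient: forced because $\dim J(U) = 3$, and sufficient because the sphere evaluation yields a perfectly non-degenerate pairing between the capped states $\{\alpha_0, \alpha_1, \alpha_2\}$ and the capped covectors $\{\beta_0, \beta_1, \beta_2\}$, with support exactly on the anti-diagonal $i+j = 2$. Everything else is bookkeeping that follows from the previously stated propositions on $J$ applied to spheres, unions, and dots.
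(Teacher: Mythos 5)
The paper does not actually prove this proposition: it is quoted from Kronheimer--Mrowka (\cite{MR3880205}, Prop.~6.1) without argument, so there is no in-paper proof to compare against. Your reconstruction is, in substance, the standard argument (and essentially the one in \cite{MR3880205}): the sphere evaluations give $\beta_i\circ\alpha_j=J(S^2_{i+j})=\delta_{i+j,2}$, the resulting anti-diagonal Gram matrix is invertible, so the dotted cups $\alpha_0,\alpha_1,\alpha_2$ are linearly independent and hence a basis of the three-dimensional space $J(U)$, with the dotted caps as the reindexed dual basis; the endomorphism $\sum_i\alpha_i\circ\beta_{2-i}$ then agrees with the identity on that basis. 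All inputs you use (the value $J(U)=\mathbb{F}^3$, the vanishing with three or more dots, the evaluation of unknotted spheres, and functoriality) are among the propositions recorded in Section~\ref{sec3}, and the computation is correct.

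One caveat worth making explicit: what you have proved is an equality of two endomorphisms of $J(U)$, i.e.\ the relation for a standalone tube regarded as a cobordism $U\to U$. The proposition is a \emph{local} relation, meant to be applied to a neck sitting inside an arbitrary foam --- this is how it is used in Section~\ref{sec4}. Upgrading your statement to the local one requires presenting the ambient foam as a composite through a level set containing $U$ together with some residual web $W$, and knowing that $J$ sends a disjoint union of cobordisms to the tensor product of the induced maps under the identification $J(W\sqcup U)\cong J(W)\otimes J(U)$. The paper only records the object-level statement of this multiplicativity (quoted as Corollary 4.4 of \cite{MR3880205}), not the morphism-level one. That monoidality does hold for $J^\#$ and the gap is routine to fill, but it is the one step your argument uses without justification and it should be invoked explicitly.
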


\begin{proposition}(Implicited in Proof of Prop 6.5,\cite{MR3880205})
	\begin{figure}[h]\label{6.2}
		\def\svgwidth{\columnwidth}
\begingroup%
  \makeatletter%
  \providecommand\color[2][]{%
    \errmessage{(Inkscape) Color is used for the text in Inkscape, but the package 'color.sty' is not loaded}%
    \renewcommand\color[2][]{}%
  }%
  \providecommand\transparent[1]{%
    \errmessage{(Inkscape) Transparency is used (non-zero) for the text in Inkscape, but the package 'transparent.sty' is not loaded}%
    \renewcommand\transparent[1]{}%
  }%
  \providecommand\rotatebox[2]{#2}%
  \newcommand*\fsize{\dimexpr\f@size pt\relax}%
  \newcommand*\lineheight[1]{\fontsize{\fsize}{#1\fsize}\selectfont}%
  \ifx\svgwidth\undefined%
    \setlength{\unitlength}{907.08661417bp}%
    \ifx\svgscale\undefined%
      \relax%
    \else%
      \setlength{\unitlength}{\unitlength * \real{\svgscale}}%
    \fi%
  \else%
    \setlength{\unitlength}{\svgwidth}%
  \fi%
  \global\let\svgwidth\undefined%
  \global\let\svgscale\undefined%
  \makeatother%
  \begin{picture}(1,0.21875)%
    \lineheight{1}%
    \setlength\tabcolsep{0pt}%
    \put(0,0){\includegraphics[width=\unitlength,page=1]{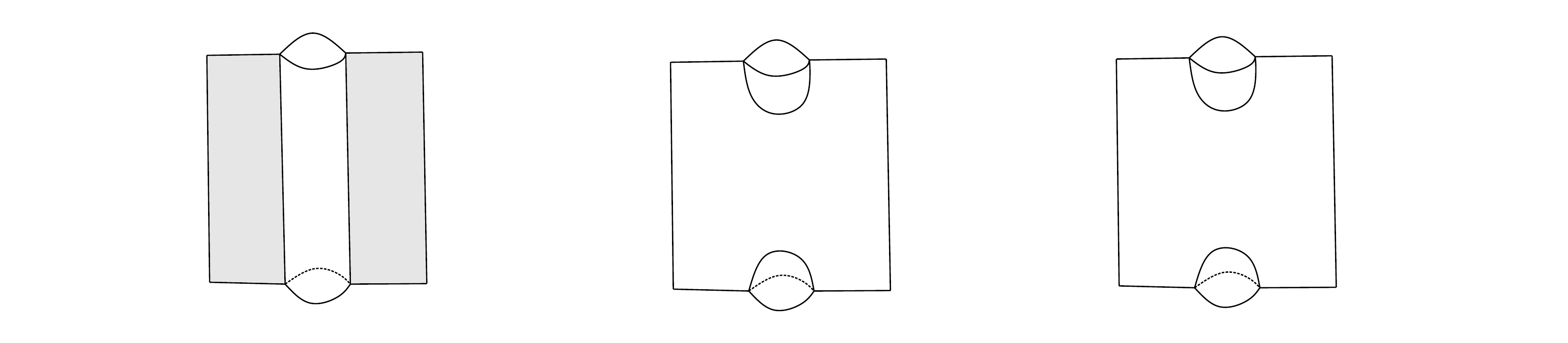}}%
    \put(0.49212508,0.17960226){\color[rgb]{0,0,0}\makebox(0,0)[lt]{\lineheight{1.25}\smash{\begin{tabular}[t]{l}.\end{tabular}}}}%
    \put(0.49588361,0.03051599){\color[rgb]{0,0,0}\makebox(0,0)[lt]{\lineheight{1.25}\smash{\begin{tabular}[t]{l}.\end{tabular}}}}%
    \put(0.77840328,0.15940661){\color[rgb]{0,0,0}\makebox(0,0)[lt]{\lineheight{1.25}\smash{\begin{tabular}[t]{l}.\end{tabular}}}}%
    \put(0.77923851,0.05020193){\color[rgb]{0,0,0}\makebox(0,0)[lt]{\lineheight{1.25}\smash{\begin{tabular}[t]{l}.\end{tabular}}}}%
    \put(0,0){\includegraphics[width=\unitlength,page=2]{6.2.pdf}}%
    \put(0.32625332,0.09987098){\color[rgb]{0,0,0}\makebox(0,0)[lt]{\lineheight{1.25}\smash{\begin{tabular}[t]{l}=\end{tabular}}}}%
    \put(0.62213783,0.09750862){\color[rgb]{0,0,0}\makebox(0,0)[lt]{\lineheight{1.25}\smash{\begin{tabular}[t]{l}+\end{tabular}}}}%
  \end{picture}%
\endgroup%
 
		\caption{The bigon relation}
		\label{graph6.2}
	\end{figure}
\end{proposition}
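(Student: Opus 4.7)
The plan is to deduce the bigon relation from the neck-cutting relation of Proposition~8 (Figure~\ref{graph6.1}) together with the dot-vanishing and closed-surface evaluations stated above. The basic observation is that in a neighborhood of a bigon face, one of the two facets bounded by the bigon can be isotoped so that its interior contains a circle along which the neck-cutting relation can be applied; this will produce three terms, most of which are forced to vanish.

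Concretely, I would first choose an essential circle lying on the ``waist'' of one of the facets incident to the bigon, bounding a small disk on each side (one of these disks being the bigon face itself). Applying neck-cutting along this circle produces a sum of three foams, one with a pair of dotted caps in the configuration $(\bullet\bullet, \emptyset)$, one with $(\bullet,\bullet)$, and one with $(\emptyset,\bullet\bullet)$. In each summand, the ``cap'' on the bigon side fuses with the bigon facet to create either an extra closed surface component or a facet carrying additional dots.

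Next, I would evaluate each of the three resulting terms. Terms in which the capped-off bigon produces a closed sphere with fewer than two dots vanish by the unknotted-surface proposition, and terms in which dots accumulate on a single facet to three or more vanish by the three-dots relation. In the two surviving terms the bigon facet has been collapsed away and the remaining configuration is exactly the pair of foams (one with a dot on the top, one with a dot on the bottom) depicted on the right-hand side of Figure~\ref{graph6.2}.

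The main obstacle will be the bookkeeping of the local topology: identifying the correct neck to cut so that, after collapsing the bigon disk, the surviving configurations match the right-hand side with the correct dot placements and no leftover facets. Once that identification is fixed, the computation is forced by the dot and closed-surface relations, so the content of the argument is really in the choice of neck rather than in any delicate calculation.
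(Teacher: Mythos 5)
The paper does not actually prove this proposition: it is imported wholesale from the proof of Proposition 6.5 of \cite{MR3880205}, so there is no internal argument to compare yours against, and your attempt has to be judged on its own. Your toolbox is the right one --- the bigon relation is indeed a consequence of the neck-cutting relation of Figure \ref{graph6.1} together with the dot and closed-surface evaluations --- but the execution has two concrete gaps. First, the neck you describe is not available. The obvious compressible circle near a bigon is the boundary of the bigon face pushed to an intermediate level of the product foam, and that circle crosses the two seam arcs, so it does not lie in the interior of a single facet and the neck-cutting relation cannot be applied to it (the dots created by the surgery must live on facets and cannot cross the singular set $s(F)$). A circle chosen instead in the interior of one of the two square bigon facets bounds a disk \emph{on that facet}, so surgery along it only splits off a dotted sphere and, after evaluating the sphere, returns the original foam --- a tautology. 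Identifying a circle that is genuinely compressible in the complement of the foam and whose surgery collapses the bigon is exactly the ``bookkeeping'' you defer, and it is where the entire content of the argument lies.

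Second, the term count does not come out as you describe. Neck-cutting always produces three terms, with dot distributions $(2,0)$, $(1,1)$, $(0,2)$ on the two new disks. If the vanishing mechanism were ``closed sphere with fewer than two dots evaluates to zero,'' then two of the three terms would die and only one would survive, not two; if instead the new disks merge into larger facets, the three-dots relation kills nothing, since no facet yet carries a dot. Obtaining the two surviving terms of Figure \ref{graph6.2}, each of which carries \emph{two} dots (one on an upper facet and one on a lower facet), requires the theta-foam evaluation $J(\theta(k_1,k_2,k_3))$ (Prop.\ 5.6 of \cite{MR3880205}, recorded later in this section), which your argument never invokes; note also that your description of the right-hand side as ``one foam with a dot on the top, one with a dot on the bottom'' does not match the statement being proved. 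As written the proposal is a plausible plan rather than a proof.
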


\begin{proposition}(\cite{MR3880205}, Prop 5.6)
	For the theta foam with dots $\theta(k_1, k_2, k_3)$,
	\begin{equation}
		J(\theta(k_1, k_2, k_3))= \left\{ \begin{aligned}
			1 & \quad \text{if } \{k_1, k_2, k_3\}= \{0, 1, 2\} \\
			0 & \quad \text{otherwise}
		\end{aligned}
	\right.
	\end{equation}
\end{proposition}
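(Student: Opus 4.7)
The plan is to reduce the evaluation of $J(\theta(k_1, k_2, k_3))$ to evaluations of dotted spheres via the local relations already listed. First, if any $k_i \geq 3$, the $3$-dots-vanishing proposition immediately gives $J(\theta(k_1,k_2,k_3)) = 0$, so we may restrict to $k_i \in \{0,1,2\}$.

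Next, I would observe that the theta foam admits an evident $S_3$-symmetry permuting its three disk facets, realized by an ambient orientation-preserving diffeomorphism of $\mathbb{R}^4$. Since $J$ is a diffeomorphism invariant of foams, the evaluation $J(\theta(k_1,k_2,k_3))$ depends only on the multiset $\{k_1, k_2, k_3\}$. This reduces the task to checking ten unordered triples in $\{0,1,2\}^3$.

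The main computational step is to apply the bigon relation together with the neck-cutting relation. Interpreted as a \emph{dot-migration} identity, the bigon relation should allow a dot on one facet of the theta foam to be traded, modulo correction terms, against dots on the adjacent facets. Iterating this, one reduces the theta foam to a standard form in which the dots are concentrated on one facet; then neck-cutting near the singular circle bubbles off a closed surface, and the sphere/torus evaluation propositions finish the computation. Only configurations yielding a sphere with exactly two dots (or a torus with no dots) survive, and a direct case analysis should show that this happens precisely when $\{k_1, k_2, k_3\} = \{0,1,2\}$, producing $J = 1$; all other cases collapse via a sphere with $\ne 2$ dots or via a facet acquiring $\ge 3$ dots.

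The hardest step will be handling the fully symmetric cases $\{0,0,0\},\{1,1,1\},\{2,2,2\}$, for which a single neck-cut on one facet produces only a tautological equation due to the $S_3$-symmetry, and dot-migration alone does not reduce the total number of dots. I expect these to require combining neck-cuts on two or all three facets simultaneously with the bigon relation, so as to force either a facet with $\ge 3$ dots (killed by the dots proposition) or a bubble that is a sphere with $\ne 2$ dots (killed by the sphere proposition). The careful combinatorial bookkeeping of how the dots migrate through this sequence of local moves is where the real work lies.
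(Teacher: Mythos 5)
First, note that the paper does not prove this proposition at all: it is quoted directly from Kronheimer--Mrowka (\cite{MR3880205}, Prop.\ 5.6), where it is established by arguments particular to the instanton construction rather than as a formal consequence of the other local relations listed in Section \ref{sec3}. So your proposal must be judged as an independent derivation from those relations, and as such it has a fatal problem: the central computational mechanism does not do what you claim. If you neck-cut a facet of the theta foam along a circle parallel to the seam, the relation reads
\begin{equation*}
\theta(k_1,k_2,k_3)=\sum_{i=0}^{2}\theta(i,k_2,k_3)\cdot S(k_1+2-i),
\end{equation*}
where $S(m)$ denotes the unknotted sphere with $m$ dots; since $S(m)=1$ only for $m=2$, the right-hand side collapses to the single term $i=k_1$ and the equation is a tautology. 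This happens for \emph{every} dot configuration, not only the symmetric ones you flag at the end: because each facet of the theta foam is a disk, no compressing circle ever relates theta foams with genuinely different dot multisets, no torus ever appears, and cutting two or three facets simultaneously collapses in exactly the same way.

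Second, the supplementary tool you invoke, the bigon (``dot-migration'') relation of Figure \ref{graph6.2}, is logically downstream of the statement you are trying to prove: in \cite{MR3880205} it is extracted from the proof of Prop.\ 6.5, which is itself deduced from neck-cutting together with Prop.\ 5.6. Invoking it here is circular. The parts of your argument that do stand up are the reduction to $k_i\le 2$ via the three-dots relation and the observation that the evaluation depends only on the multiset $\{k_1,k_2,k_3\}$ (the facet permutations are realized by ambient isotopy in $\mathbb{R}^4$, using the extra dimension to undo a reflection of $\mathbb{R}^3$); but these only cut down the number of cases and do not evaluate any of them. To pin down the remaining values one needs an input that genuinely sees the seam --- either the instanton-theoretic computation of \cite{MR3880205} or the theta evaluation taken as an axiom, as in Khovanov's $sl_3$ theory --- and no combination of the sphere, torus, and neck-cutting relations alone will produce it.
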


\begin{proposition}(Implicited in Proof of Prop 6.8,\cite{MR3880205})
	\begin{figure}[h]\label{6.3}
		\def\svgwidth{\columnwidth}
\begingroup%
  \makeatletter%
  \providecommand\color[2][]{%
    \errmessage{(Inkscape) Color is used for the text in Inkscape, but the package 'color.sty' is not loaded}%
    \renewcommand\color[2][]{}%
  }%
  \providecommand\transparent[1]{%
    \errmessage{(Inkscape) Transparency is used (non-zero) for the text in Inkscape, but the package 'transparent.sty' is not loaded}%
    \renewcommand\transparent[1]{}%
  }%
  \providecommand\rotatebox[2]{#2}%
  \newcommand*\fsize{\dimexpr\f@size pt\relax}%
  \newcommand*\lineheight[1]{\fontsize{\fsize}{#1\fsize}\selectfont}%
  \ifx\svgwidth\undefined%
    \setlength{\unitlength}{992.12598425bp}%
    \ifx\svgscale\undefined%
      \relax%
    \else%
      \setlength{\unitlength}{\unitlength * \real{\svgscale}}%
    \fi%
  \else%
    \setlength{\unitlength}{\svgwidth}%
  \fi%
  \global\let\svgwidth\undefined%
  \global\let\svgscale\undefined%
  \makeatother%
  \begin{picture}(1,0.22857143)%
    \lineheight{1}%
    \setlength\tabcolsep{0pt}%
    \put(0,0){\includegraphics[width=\unitlength,page=1]{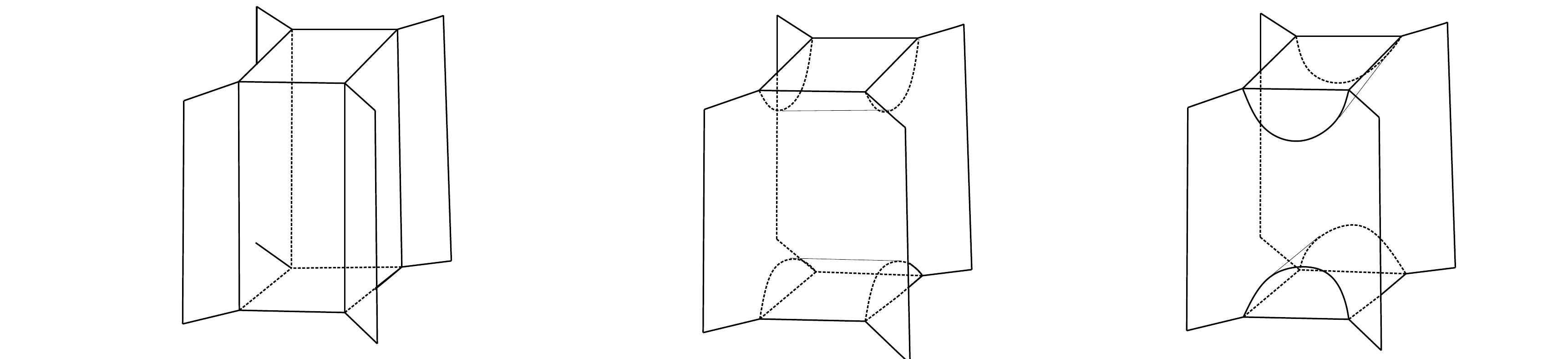}}%
    \put(0.34781521,0.10039597){\color[rgb]{0,0,0}\makebox(0,0)[lt]{\lineheight{1.25}\smash{\begin{tabular}[t]{l}=\end{tabular}}}}%
    \put(0.66828052,0.10176433){\color[rgb]{0,0,0}\makebox(0,0)[lt]{\lineheight{1.25}\smash{\begin{tabular}[t]{l}+\end{tabular}}}}%
  \end{picture}%
\endgroup%
 
		\caption{The square relation}
		\label{graph6.3}
	\end{figure}
\end{proposition}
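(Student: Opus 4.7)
The plan is to prove the square relation of Figure~\ref{graph6.3} by exhibiting explicit foam morphisms that realize a direct-sum decomposition $J(W) \cong J(W_1) \oplus J(W_2)$, where $W$ denotes the web containing the square face on the left-hand side and $W_1, W_2$ are the two resolutions on the right-hand side. This mirrors Khovanov's argument for the analogous $sl_3$ square removal, adapted to the present setting using only the local relations collected in Section~\ref{sec3}.

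First, for each $i \in \{1,2\}$, I would construct a canonical \emph{zip} foam $\alpha_i : W_i \to W$ and its time-reversal \emph{unzip} foam $\beta_i : W \to W_i$, both supported in a small neighborhood of the square face and given by the product cobordism elsewhere. These are the natural local foams merging or splitting the two parallel strands of $W_i$ into the square of $W$.

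Second, I would compute the four compositions $\beta_j \alpha_i$. When $i = j$, the foam $\beta_i \alpha_i$ contains a bigon region inside the cylinder over $W_i$; applying the bigon relation of Figure~\ref{graph6.2} followed by the theta-foam evaluation collapses it to the identity, yielding $J(\beta_i) \circ J(\alpha_i) = \mathrm{id}_{J(W_i)}$. When $i \neq j$, the composition contains a theta-like closed subfoam whose dot configuration evaluates to zero under the theta proposition, so $J(\beta_j) \circ J(\alpha_i) = 0$.

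The remaining, and hardest, step is to verify the completeness identity
\begin{equation*}
J(\alpha_1) \circ J(\beta_1) + J(\alpha_2) \circ J(\beta_2) = \mathrm{id}_{J(W)}.
\end{equation*}
My plan is to apply the neck-cutting relation of Figure~\ref{graph6.1} to the identity cylinder over $W$ along a small disc placed just above the square face; this expresses the identity as a sum of three dot-decorated foams. A careful case analysis, reducing each resulting foam via the bigon relation together with the sphere and theta evaluations, should identify two of the three terms with $\alpha_1 \beta_1$ and $\alpha_2 \beta_2$ respectively, while the third must cancel. The main obstacle is tracking the dot combinatorics through these reductions and verifying the final cancellation, which will rely on a specific theta-foam evaluation vanishing in $\mathbb{F}$; once this is in hand, the three steps combine to give the asserted identity.
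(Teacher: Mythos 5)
The paper does not actually prove this proposition: it is stated as a black box, imported from the proof of Proposition 6.8 in \cite{MR3880205}, so there is no in-paper argument to compare yours against. Your outline --- realize $J(W)\cong J(W_1)\oplus J(W_2)$ by zip/unzip foams $\alpha_i,\beta_i$, check $J(\beta_j)J(\alpha_i)=\delta_{ij}\,\mathrm{id}$, and then check the completeness identity $J(\alpha_1)J(\beta_1)+J(\alpha_2)J(\beta_2)=\mathrm{id}_{J(W)}$ --- is the standard shape of the argument and is the same idempotent decomposition that Kronheimer and Mrowka (and Khovanov in the $sl_3$ setting) carry out. So the strategy is right.

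However, two of your reduction steps are not legitimate applications of the relations you invoke, and these are exactly the places where the real work happens. First, in Step 2 the composite $\beta_i\alpha_i$ does not contain a bigon: its generic middle slice is the square web $W$ itself, and what it actually contains is two closed singular circles, each bounding a disk membrane attached to the product sheets $W_i\times[0,1]$. Removing such a membrane is not the bigon relation of Figure \ref{graph6.2}; it requires cutting the ambient sheet around the singular circle and evaluating the resulting closed (theta-type) pieces, and the outcome is the identity only after that bookkeeping. Second, and more seriously, in Step 3 the neck-cutting relation of Figure \ref{graph6.1} applies to a compressible tube (an annulus) contained in a \emph{single facet} of a foam. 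The identity cobordism $W\times[0,1]$ has no such tube near the square: its facets there are the four rectangles $e\times[0,1]$ over the edges of the square, each a disk, and the planar square region itself is not part of the foam at all. So ``neck-cutting along a small disc just above the square face'' is not a move available to you, and the completeness identity cannot be obtained by a single application of Figure \ref{graph6.1}. To close the argument you would need either the more involved surgery scheme of \cite{MR3880205} (where this identity is extracted from the proof of their Proposition 6.8) or a nondegeneracy-of-pairing argument reducing the identity to evaluations of closed foams containing the square; as written, the proposal's key step does not go through.
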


Figure \ref{6.4}  \ref{6.4.2} illustrates some relations which are corollaries of  \ref{6.1} \ref{6.2}  \ref{6.3}

\section{Generalization to virtual webs}  \label{sec4}

From now on we consider oriented virtual webs, i.e., virtual webs with an orientation on each of its edges. 
The sole role that the orientations play is to specify the resolutions at the crossings, as stated below. We can resolute each virtual crossing in two ways, see Fig  \ref{3.1}. If we resolute every  crossing, we get a web(ignore the orientations), and we call it a \textbf{complete resolution} of the virtual web.

\begin{figure}[h]
	\def\svgwidth{\columnwidth}
\begingroup%
  \makeatletter%
  \providecommand\color[2][]{%
    \errmessage{(Inkscape) Color is used for the text in Inkscape, but the package 'color.sty' is not loaded}%
    \renewcommand\color[2][]{}%
  }%
  \providecommand\transparent[1]{%
    \errmessage{(Inkscape) Transparency is used (non-zero) for the text in Inkscape, but the package 'transparent.sty' is not loaded}%
    \renewcommand\transparent[1]{}%
  }%
  \providecommand\rotatebox[2]{#2}%
  \newcommand*\fsize{\dimexpr\f@size pt\relax}%
  \newcommand*\lineheight[1]{\fontsize{\fsize}{#1\fsize}\selectfont}%
  \ifx\svgwidth\undefined%
    \setlength{\unitlength}{841.88976378bp}%
    \ifx\svgscale\undefined%
      \relax%
    \else%
      \setlength{\unitlength}{\unitlength * \real{\svgscale}}%
    \fi%
  \else%
    \setlength{\unitlength}{\svgwidth}%
  \fi%
  \global\let\svgwidth\undefined%
  \global\let\svgscale\undefined%
  \makeatother%
  \begin{picture}(1,0.16835017)%
    \lineheight{1}%
    \setlength\tabcolsep{0pt}%
    \put(0,0){\includegraphics[width=\unitlength,page=1]{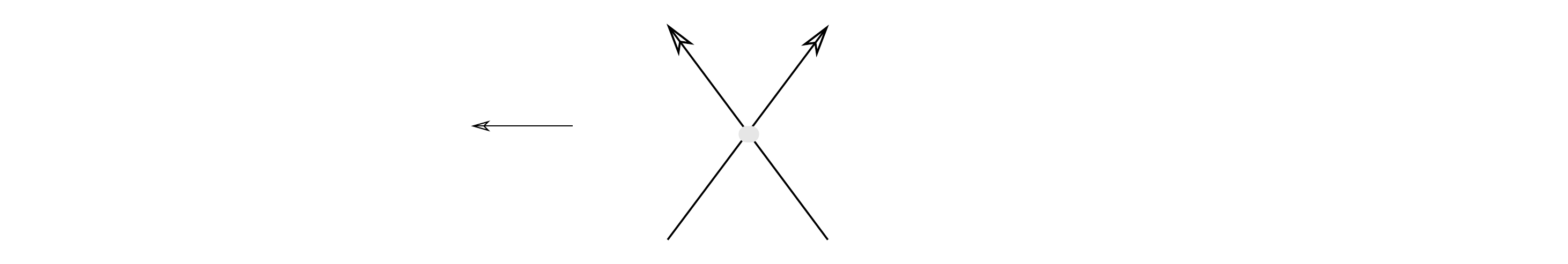}}%
    \put(0.2926392,0.09621179){\color[rgb]{0,0,0}\makebox(0,0)[lt]{\lineheight{1.25}\smash{\begin{tabular}[t]{l}0-resolution\\\end{tabular}}}}%
    \put(0,0){\includegraphics[width=\unitlength,page=2]{3.1.pdf}}%
    \put(0.55896206,0.09359399){\color[rgb]{0,0,0}\makebox(0,0)[lt]{\lineheight{1.25}\smash{\begin{tabular}[t]{l}1-resolution\\\end{tabular}}}}%
    \put(0,0){\includegraphics[width=\unitlength,page=3]{3.1.pdf}}%
  \end{picture}%
\endgroup%

	\caption{Resolute a  crossing of a virtual web} 
	\label{3.1}
\end{figure}

Let $I$ be the set of  crossings of a virtual web $D$. To $D$ we associate an $|I|$-dimensional cube of $\mathbb{F}$-vector spaces. Vertices of the cube are in a bijection with subsets of $I$. To $J\subset I$ we associate the complete resolution $D_J$ of $D$ according to $J$: a  crossing $p \in I$ gets 1-resolution if and only if $p \in J$.

In the vertex $J$ we place the graded $\mathbb{F}$ -vector space $J(D_J)$, and to an inclusion $J \subset J \cup \{b\}$, where $b \in I\backslash J$, we assign the homomorphism
\begin{equation}
	J(D_J) \longrightarrow J(D_{J \cup \{b\}})
\end{equation}
induced by the basic cobordism in Fig \ref{3.2}.

\begin{figure}
	\def\svgwidth{\columnwidth}
	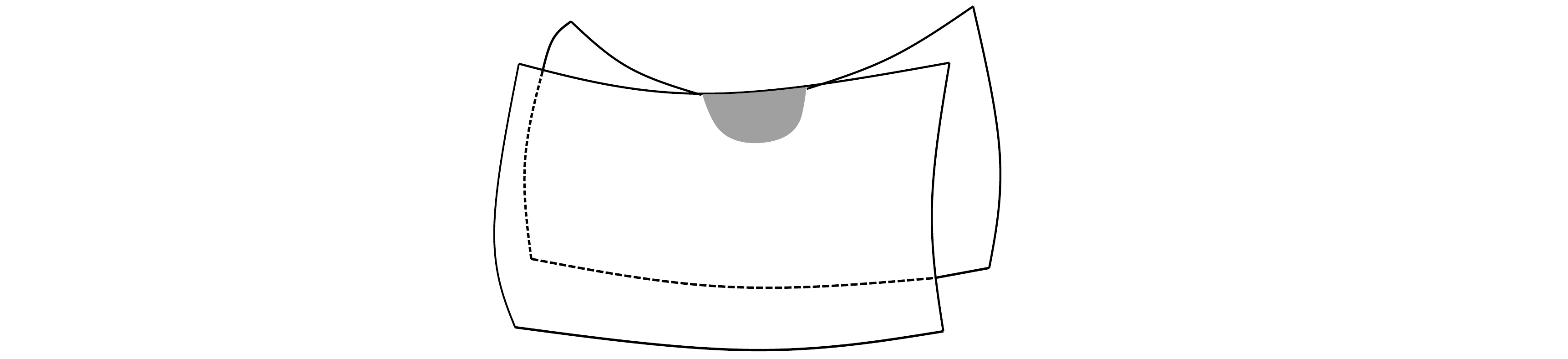
	\caption{The basic cobordism} 
	\label{3.2}
\end{figure} 

Since we work in $\mathbb{F}$ each square in the cube commute, and we can define the total complex $F(D)$. More precisely. let $F(D)_i= \bigoplus_{|J|=i} J(D_J)$, and the differential $d^i: F(D)_i \longrightarrow F(D)_{i+1}$ is induced by the edge homomorphisms defined above.  Fig \ref{3.4}  explains our construction of $F(D)$.

\begin{figure}[h]
	\def\svgwidth{\columnwidth}
\begingroup%
  \makeatletter%
  \providecommand\color[2][]{%
    \errmessage{(Inkscape) Color is used for the text in Inkscape, but the package 'color.sty' is not loaded}%
    \renewcommand\color[2][]{}%
  }%
  \providecommand\transparent[1]{%
    \errmessage{(Inkscape) Transparency is used (non-zero) for the text in Inkscape, but the package 'transparent.sty' is not loaded}%
    \renewcommand\transparent[1]{}%
  }%
  \providecommand\rotatebox[2]{#2}%
  \newcommand*\fsize{\dimexpr\f@size pt\relax}%
  \newcommand*\lineheight[1]{\fontsize{\fsize}{#1\fsize}\selectfont}%
  \ifx\svgwidth\undefined%
    \setlength{\unitlength}{841.88976378bp}%
    \ifx\svgscale\undefined%
      \relax%
    \else%
      \setlength{\unitlength}{\unitlength * \real{\svgscale}}%
    \fi%
  \else%
    \setlength{\unitlength}{\svgwidth}%
  \fi%
  \global\let\svgwidth\undefined%
  \global\let\svgscale\undefined%
  \makeatother%
  \begin{picture}(1,0.2020202)%
    \lineheight{1}%
    \setlength\tabcolsep{0pt}%
    \put(0,0){\includegraphics[width=\unitlength,page=1]{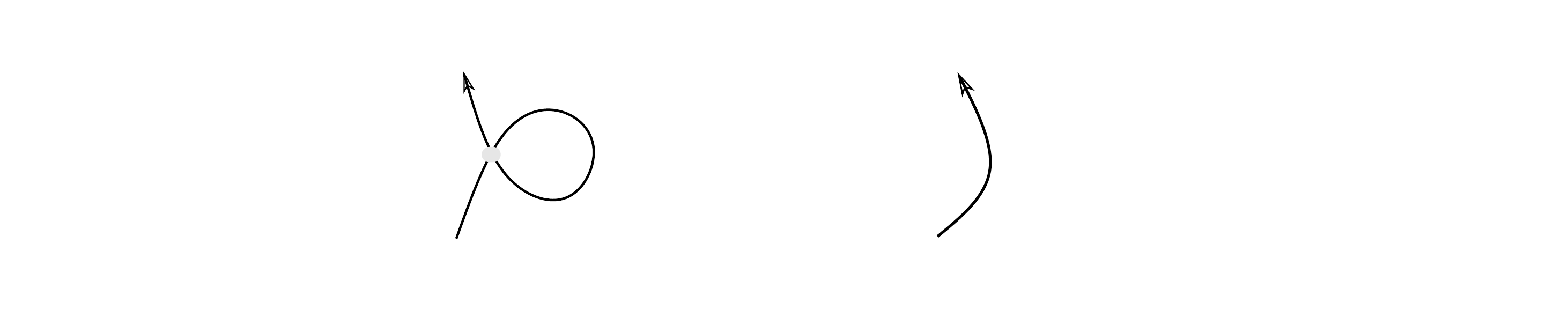}}%
    \put(0.18683431,0.09187924){\color[rgb]{0,0,0}\makebox(0,0)[lt]{\lineheight{1.25}\smash{\begin{tabular}[t]{l}D =\end{tabular}}}}%
    \put(0.50915276,0.09435383){\color[rgb]{0,0,0}\makebox(0,0)[lt]{\lineheight{1.25}\smash{\begin{tabular}[t]{l}D'=\end{tabular}}}}%
    \put(0,0){\includegraphics[width=\unitlength,page=2]{3.4.pdf}}%
  \end{picture}%
\endgroup%
 
	\caption{virtual Reidemeister move I}
	\label{3.4}
\end{figure}

\begin{figure}[h]
	\def\svgwidth{\columnwidth}
\begingroup%
  \makeatletter%
  \providecommand\color[2][]{%
    \errmessage{(Inkscape) Color is used for the text in Inkscape, but the package 'color.sty' is not loaded}%
    \renewcommand\color[2][]{}%
  }%
  \providecommand\transparent[1]{%
    \errmessage{(Inkscape) Transparency is used (non-zero) for the text in Inkscape, but the package 'transparent.sty' is not loaded}%
    \renewcommand\transparent[1]{}%
  }%
  \providecommand\rotatebox[2]{#2}%
  \newcommand*\fsize{\dimexpr\f@size pt\relax}%
  \newcommand*\lineheight[1]{\fontsize{\fsize}{#1\fsize}\selectfont}%
  \ifx\svgwidth\undefined%
    \setlength{\unitlength}{935.43307087bp}%
    \ifx\svgscale\undefined%
      \relax%
    \else%
      \setlength{\unitlength}{\unitlength * \real{\svgscale}}%
    \fi%
  \else%
    \setlength{\unitlength}{\svgwidth}%
  \fi%
  \global\let\svgwidth\undefined%
  \global\let\svgscale\undefined%
  \makeatother%
  \begin{picture}(1,0.12121212)%
    \lineheight{1}%
    \setlength\tabcolsep{0pt}%
    \put(0,0){\includegraphics[width=\unitlength,page=1]{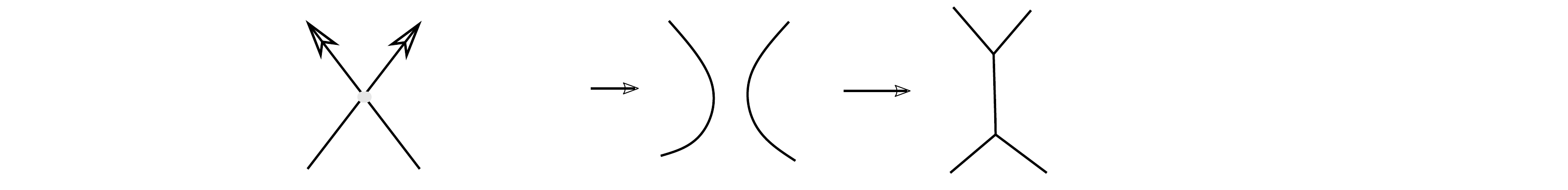}}%
    \put(0.28744488,0.0594687){\color[rgb]{0,0,0}\makebox(0,0)[lt]{\lineheight{1.25}\smash{\begin{tabular}[t]{l}=\\\end{tabular}}}}%
    \put(0.35131549,0.05788248){\color[rgb]{0,0,0}\makebox(0,0)[lt]{\lineheight{1.25}\smash{\begin{tabular}[t]{l}0\end{tabular}}}}%
    \put(0.75137757,0.05629626){\color[rgb]{0,0,0}\makebox(0,0)[lt]{\lineheight{1.25}\smash{\begin{tabular}[t]{l}0\end{tabular}}}}%
    \put(0.31821892,0.0594687){\color[rgb]{0,0,0}\makebox(0,0)[lt]{\lineheight{1.25}\smash{\begin{tabular}[t]{l}[\end{tabular}}}}%
    \put(0.78447416,0.05735374){\color[rgb]{0,0,0}\makebox(0,0)[lt]{\lineheight{1.25}\smash{\begin{tabular}[t]{l}]\end{tabular}}}}%
    \put(0,0){\includegraphics[width=\unitlength,page=2]{3.3.pdf}}%
  \end{picture}%
\endgroup%
 
	\label{3.3}
\end{figure}

\begin{theorem}
	The homotopy type of $F(D)$ is an invariant of $D$, i.e. it is invariant under the virtual Reidemeister moves.
\end{theorem}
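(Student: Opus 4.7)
The plan is to verify chain homotopy invariance of $F(D)$ under each of the three virtual Reidemeister moves separately, following Khovanov's strategy for his $sl_3$ knot homology. For each move we resolve only the crossings lying inside the local disk where the move is performed; this produces a small cube of webs, and our task is to identify this cube, up to chain homotopy, with the cube (or single vertex) attached to the other side of the move. All arguments are carried out using the local relations of $J$ recalled in Section \ref{sec3} together with Gaussian elimination, which is especially clean because we work over $\mathbb{F}$ and no signs intervene.

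First I would treat virtual RV I. Only one crossing appears inside the disk, so the cube has two vertices corresponding to the $0$- and $1$-resolutions, joined by the basic cobordism of Fig.~\ref{3.2}. One of the two resolutions contains a small bigon; applying the bigon relation (Fig.~\ref{graph6.2}) splits $J$ of that resolution as a direct sum of two copies of $J$ of the simplified web. One of these copies pairs with the other vertex of the cube through an isomorphism induced by the basic cobordism, and Gaussian elimination discards this acyclic summand, leaving exactly $F(D')$.

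Next I would handle virtual RV II. The cube is two-dimensional with four vertices. Two of the four resolutions contain a bigon, which after application of Fig.~\ref{graph6.2} produces a direct summand that is canceled by Gaussian elimination against neighboring vertices; a third resolution contains a small digon/neck which simplifies via the neck-cutting relation (Fig.~\ref{graph6.1}). After these reductions one is left with a single vertex matching $F(D')$.

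The hardest case, as expected, is virtual RV III. The cube is three-dimensional with eight vertices, and the plan is to apply the square relation (Fig.~\ref{graph6.3}) to one internal resolution containing a square face and then perform two successive Gaussian eliminations to reduce the $8$-vertex cube to a $4$-vertex subcube. Carrying out the same reduction procedure on the opposite side of the move produces a manifestly isomorphic $4$-vertex cube, which yields the desired chain homotopy equivalence. The main obstacle here is bookkeeping: one must carefully identify each of the basic cobordisms appearing in the edge maps, track which faces of the cube the square and bigon relations apply to, and check that the chain maps produced by the simplifications on the two sides agree after the identification. Working over $\mathbb{F}$ removes all sign issues, and every web appearing in the reduced complexes is either a bigon, a triangle, or a small square, to which the relations of Section \ref{sec3} apply directly; this is what makes the otherwise lengthy computation tractable.
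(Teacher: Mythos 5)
Your proposal is correct and follows essentially the same route as the paper: both reduce the claim to the invariance argument for Khovanov's $sl_3$ homology, using the bigon, square and neck-cutting relations of Section \ref{sec3} to decompose the cube vertices and cancel acyclic summands, move by move. The only cosmetic difference is that the paper writes down explicit chain maps $f,g$ and a homotopy $h$ (borrowed from \cite{MR2336253}, Figure 7) rather than organizing the cancellations as Gaussian elimination; over $\mathbb{F}$ these are interchangeable.
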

\begin{proof}
	The argument is similar to the proof of invariance of Khovanov' $sl_3$ homology under Reidemeister moves, since all the local relations used there have their counterparts in our theory(Section \ref{sec3}). For example, to prove the invariance under virtual Reidemeister moves I(Fig.\ref{3.4}), we borrow the diagram from \cite{MR2336253}, Figure 7, and define the chain maps $f, g$, the chain homotopy $h$ as illustrated in the graph. The statement there carries verbatim to show that $f, g$ are chain homotopy equivalences to each other.

	\begin{figure}[h]
		\def\svgwidth{\columnwidth}
		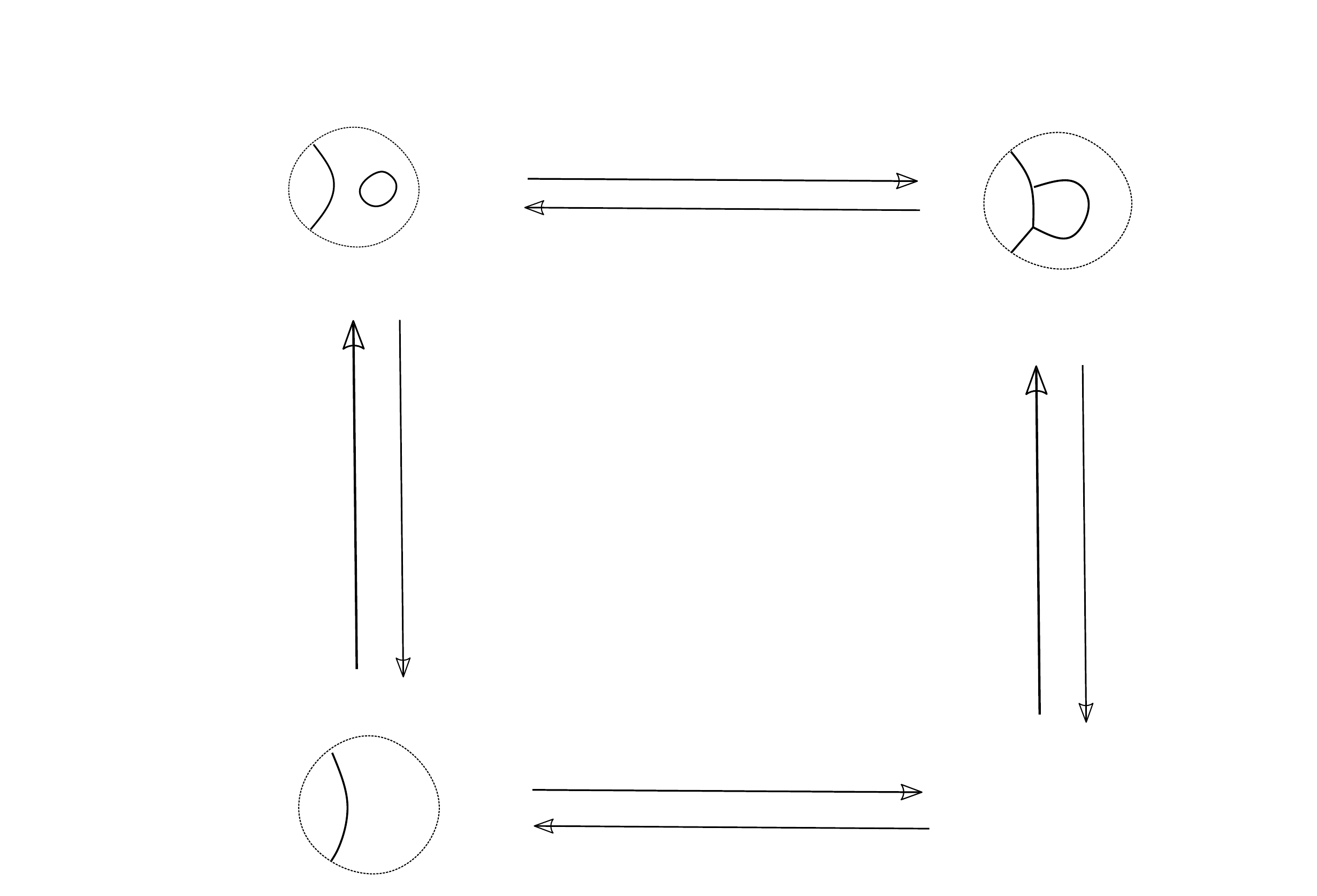 
		\label{graph3.5}
		\caption{Chain homotopy equivalance under Virtual Reidemeister move I}
	\end{figure}
	
\end{proof}

 An immediate corollary is

\begin{corollary} \label{c5}
	The Euler characteristic $e(D)$ of $F(D)$ is an invariant of $D$.
\end{corollary}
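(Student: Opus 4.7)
The plan is to deduce Corollary \ref{c5} directly from the previous theorem together with the standard fact that the Euler characteristic of a bounded complex of finite-dimensional vector spaces is a chain homotopy invariant.

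First I would verify that $e(D)$ is a well-defined integer. Since the crossing set $I$ is finite, the cube has only finitely many vertices, so $F(D)_i$ vanishes outside $0\le i\le |I|$. At each vertex $J\subset I$, the complete resolution $D_J$ is a genuine web with no crossings, and $J(D_J)$ is finite-dimensional: by Corollary \ref{c1} this is true when $D_J$ is a disjoint union of circles, and the relations in Section \ref{sec3} (neck-cutting, bigon, square) allow one to reduce any closed web to such a union, bounding the dimension at every vertex. Thus
\begin{equation}
e(D)=\sum_{i=0}^{|I|}(-1)^i\dim_{\mathbb{F}} F(D)_i
\end{equation}
is a finite integer.

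Next I would recall the elementary identity
\begin{equation}
\sum_i(-1)^i\dim F(D)_i=\sum_i(-1)^i\dim H^i(F(D)),
\end{equation}
valid for any bounded complex of finite-dimensional vector spaces. Since chain homotopy equivalent complexes have isomorphic cohomology in every degree, the right hand side depends only on the chain homotopy type of $F(D)$. Combined with the theorem just proved, which states that the chain homotopy type of $F(D)$ is unchanged under each virtual Reidemeister move, this yields invariance of $e(D)$.

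There is no serious obstacle here: once the invariance theorem for $F(D)$ is granted, the corollary is pure bookkeeping. The only item worth flagging is the finite-dimensionality of $J$ on each resolved web, which is implicit in Section \ref{sec3} but deserves the explicit remark above so that the alternating sum defining $e(D)$ makes sense.
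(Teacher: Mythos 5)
Your argument is exactly the "immediate" deduction the paper intends: the paper offers no proof beyond calling the corollary immediate from the homotopy invariance of $F(D)$, and your route via the identity between the alternating sum of dimensions of the chain groups and of the cohomology groups is the standard justification. The extra remark on finite-dimensionality is a reasonable piece of care but does not change the approach.
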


\textit{Proof of Theorem \ref{thm1}}    

     Let $D$ be an oriented virtual web. We proved that the Euler characteristic of $F(D)$ is equal to the Penrose number of $D$. In particular, when $D$ has no crossings, the dimension of $J(D)$ is equal to the number of Tait colorings of $D$ by Prop   \ref{prop3}.          
     
     If $D$ has no vertices, then it is a union of k circles(Lemma \ref{l1}).  By Corollary  \ref{c1}  \ref{c5}, $e(D)=e(U^k)=dim J(U^k)=3^k=P(D)$ , where $U^k$ is a disjoint union of k circles.  Furthermore, from the definition of $F(D)$, we have that $e(D)$ satisfies the local relation in Prop. \ref{prop2}. This identifies $e(D)$ with $P(D)$. In particular, when $D$ is a web, $dim J(D)= e(D)= P(D) =T(D)$, by Prop \ref{prop3}.

\bibliography{zz}
\nocite{*}

\begin{figure}[h]
	\def\svgwidth{\columnwidth}
	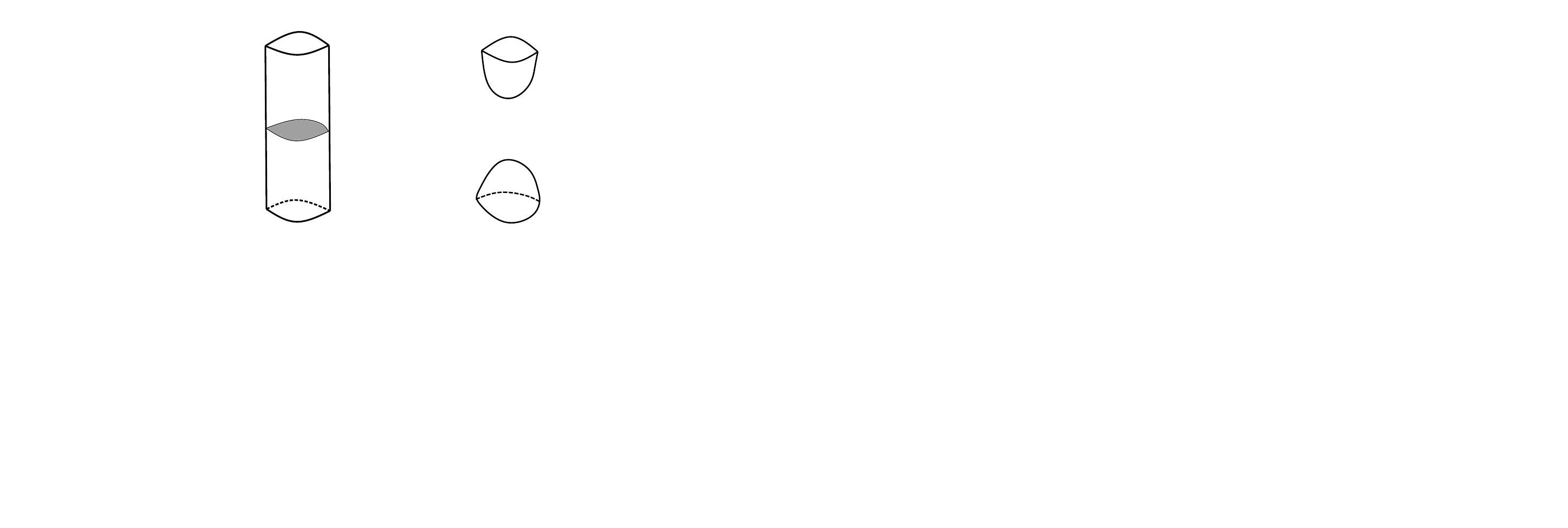
	\label{6.4}
\end{figure}

\begin{figure}[h]
	\def\svgwidth{\columnwidth}
	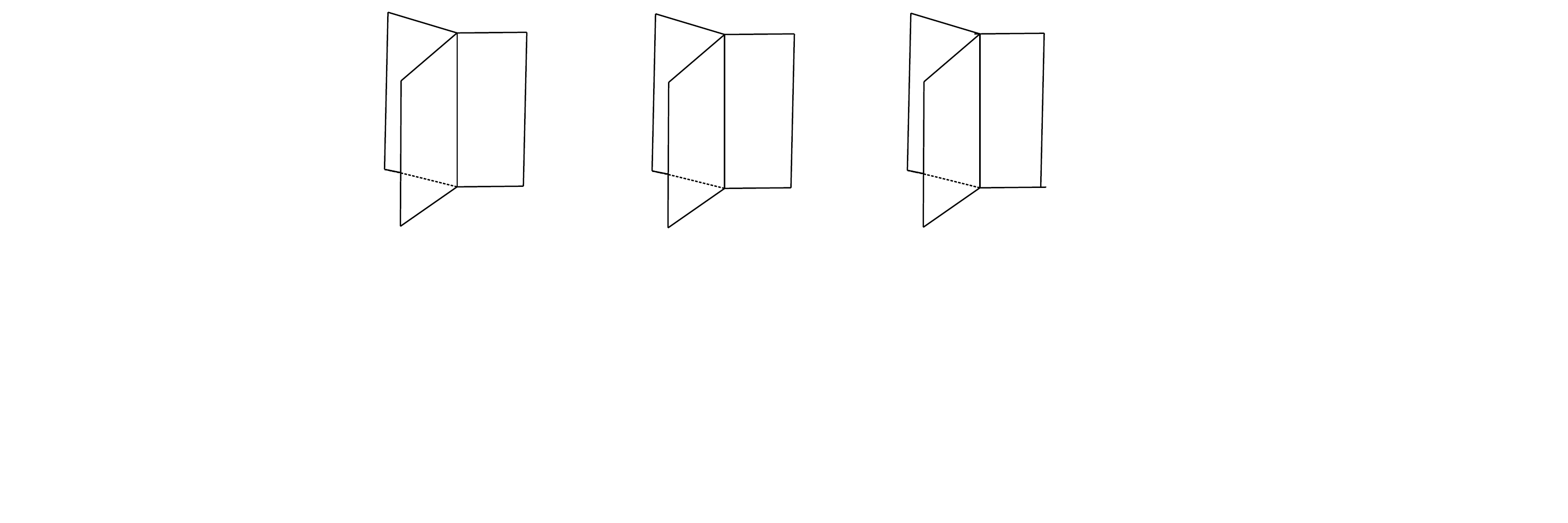
	\label{6.4.2}
\end{figure}

\end{document}